

\documentclass[preprint,12pt]{elsarticle}
\usepackage[T1]{fontenc}
\usepackage[latin9]{inputenc}
\usepackage[normalem]{ulem}
\usepackage{amsthm}
\usepackage{amsmath}
\usepackage{esint, euscript}
\usepackage{color, graphicx, lscape}




\usepackage{amssymb}


\newcommand{\E}{{\mathbb E}}
\usepackage{soul}
\newcommand{\eqd}{\stackrel{d}{=}}
\def\I{I\!\!I}
\newcommand{\N}{\mathbb{N}}
\newcommand{\eps}{\varepsilon}
\newcommand{\floor}{\operatorname{floor}}
\newcommand{\wU}{\widetilde{U}_{n}}
\makeatletter


\theoremstyle{plain}
\newtheorem{theorem}{\protect\theoremname}
\newtheorem{lem}{\protect\lemmaname}
\newtheorem{rem}{\protect\remname}
\newtheorem{ex}{\protect\example}

\newtheorem{cor}{\protect\corr}

\renewcommand{\i}{\textrm{i}}
\providecommand{\theoremname}{Theorem}
\providecommand{\remname}{Remark}
\providecommand{\lemmaname}{Lemma}
\providecommand{\example}{Example}
\providecommand{\prop}{Proposition}
\providecommand{\corr}{Corollary}
\providecommand{\defname}{Definition}

\renewcommand{\P}{\mathbb{P}}

\newcommand{\R}{{\mathbb R}}
\newcommand{\C}{{\mathbb C}}

\newcommand{\RR}{{\EuScript R}}

\newcommand{\Var}{\operatorname{Var}}
\newcommand{\M}{\mathcal{M}}

\newcommand{\F}{\mathcal{F}}
\newcommand{\K}{\mathcal{K}}
\newcommand{\G}{\mathcal{G}}
\renewcommand{\Re}{\operatorname{Re}}
\renewcommand{\Im}{\operatorname{Im}}
\newcommand{\mean}{\operatorname{mean}}

\journal{Stochastic processes and their applications}

\begin{document}

\begin{frontmatter}



\title{Low-frequency estimation of continuous-time moving average  L\'evy processes $^{1}$}


\author[1,2]{Denis Belomestny}
\author[2]{Vladimir Panov}
\author[3]{Jeannette H. C.  Woerner}
\address[1]{University of  Duisburg-Essen, Thea-Leymann-Str. 9, 45127 Essen,  Germany}
\address[2]{National Research University Higher School of Economics \\ Shabolovka, 26, 119049 Moscow,   Russia}
\address[3]{Technische Universit{\"a}t Dortmund, Vogelpothsweg 87, 44227 Dortmund, Germany}
\begin{abstract}
In this paper we study the problem of statistical inference for a continuous-time moving average  L\'evy process of the form 
\[
Z_{t}=\int_{\R}\mathcal{K}(t-s)\, dL_{s},\quad t\in\mathbb{R}
\]
 with a deterministic kernel \(\K\) and a  L{\'e}vy process \(L\). Especially the  estimation of  the L\'evy measure \(\nu\) of  $L$ from low-frequency observations of the process $Z$ is considered.
We construct a consistent estimator, derive its convergence rates and  illustrate its performance  by a numerical example.  On the technical level, the main challenge  is to establish  a kind of exponential mixing  for  continuous-time moving average  L\'evy processes.  
\end{abstract}
\begin{keyword}
moving average \sep Mellin transform \sep low-frequency estimation




\end{keyword}

\end{frontmatter}
\footnotetext[1]{The study has been funded by the Russian Academic Excellence Project ``5-100''. The first and the third author  acknowledge the financial support from  the Deutsche
Forschungsgemeinschaft through the SFB 823 ``Statistical modelling of nonlinear dynamic processes''. \\ \newline
E-mail addresses: denis.belomestny@uni-due.de (D.Belomestny),  \\
vpanov@hse.ru (V.Panov), jwoerner@mathematik.uni-dortmund.de (J.Woerner)}
\section{Introduction}
\label{sec: intr}
Continuous-time L\'{e}vy-driven moving average  processes of the form: \[Z_t=\int_{-\infty}^\infty \mathcal{K}(s,t)\, dL_s\] with  a deterministic kernel $\mathcal{K}$ and a L\'{e}vy process $(L_t)_{t\in \mathbb{R}}$  build a large class of stochastic processes including semimartingales and non-semimartingales, cf. Basse and Pedersen \cite{BassePedersen2009}, Basse-O'Connor and Rosinsky \cite{BasseRosinski2016},  Bender, Lindner and Schicks \cite{BenderLindnerSchicks2012}, as well as long-memory processes. Starting point was the paper by Rajput and Rosinski \cite{rajput1989spectral} providing conditions on the interplay between $\mathcal{K}$ and $L$ such that $Z$ is well defined. Continuous-time L\'{e}vy-driven moving average processes provide a unifying approach to many popular stochastic models like L\'{e}vy driven Ornstein-Uhlenbeck processes, fractional L\'{e}vy processes and CARMA processes. Furthermore, they are the building blocks of more involved models such as L\'{e}vy semistationary processes and ambit processes, which are popular in turbulence and finance, cf. Barndorff-Nielsen, Benth and Veraart \cite{Barndorff-NielsenBenthVeraart2015}.  
\par
Statistical inference for Ornstein-Uhlenbeck processes and CARMA processes is already well-established due to the special structure of the processes, for an overview see Brockwell and Lindner \cite{BrockwellLindner2012}, whereas for general continuous-time L\'{e}vy driven moving average  processes so far only partial results are known in the literature mainly concerning parameters which enter the kernel function, cf. Cohen and Lindner \cite{CohenLindner2013} for an approach via empirical moments or Zhang, Lin and Zhang \cite{ZhangLinZhang2015} for a least squares approach. Further results concern limit theorems for the power variation, cf.  Glaser \cite{Glaser2015}, 
Basse-O'Connor, Lachieze-Rey and Podolskij \cite{BasseLachiezePodolskij2015}, which may be used for statistical inference based on high-frequency data.
\par
In this paper we consider a special case of stationary continuous-time L\'{e}vy-driven moving average  processes of the form $Z_t=\int_{-\infty}^\infty \K(s-t)\, dL_s$ and aim to infer the unknown parameters of the driving L\'{e}vy process from its low-frequency observations. Our setting especially includes the case of Gamma-kernels of the form $\K(t)=t^\alpha e^{-\lambda t}1_{[0,\infty)}(t)$ with $\lambda>0$ and $\alpha>-1/2$, which serve as a popular kernel for applications in finance and turbulence, cf. Barndorff-Nielsen and Schmiegel  \cite{Barndorff-NielsenSchmiegel2009}. The special symmetric
case of the well-balanced Ornstein-Uhlenbeck process has been discussed in
Schnurr and Woerner \cite{WD}.
\par
 In fact, the resulting statistical problem is rather challenging for several reasons. On the one hand, the set of parameters, i.e., the so-called L\'evy-triplet of the driving L\'evy process contains, in general, an infinite dimensional object, a L\'evy measure making the statistical problem nonparametric. On the other hand, the relation between the parameters of the underlying L\'evy process \((L_t)\) and those of the resulting moving average process \((Z_t)\) is rather nonlinear and implicit, pointing out to a nonlinear ill-posed statistical problem. It turns out that in Fourier domain this relation becomes exponentially linear and has a form of  multiplicative convolution. This observation underlies our estimation procedure, which basically consists of three steps. First, we estimate the marginal characteristic function of the L\'{e}vy-driven moving average  process \((Z_t)\). Then we estimate the Mellin transform of the second derivative of the log-transform of the characteristic function. Finally, an inverse Mellin transform technique is used to reconstruct the L\'evy density of the underlying L\'evy process.  
 \par
 The paper is organized as follows. In the next session, we explain our setup and discuss the correctness of our model. In Section~\ref{melsec}, we present the estimation procedure. Our main theoretical results related to the rates of convergence of the estimates are given in Section~\ref{conv}. Next, in Section~\ref{num}, we provide a numerical example, which shows the performance of our procedure. All proofs are collected in the appendix.
\section{Setup}
\label{sec: setup}
In this paper we study a stationary continuous-time moving average (MA) L\'evy process $\left(Z_{t}\right)_{t\in\mathbb{R}}$
of the form:  
\begin{equation}
Z_{t}=\int_{-\infty}^{\infty}\mathcal{K}(t-s)\, dL_{s},\quad t\in\mathbb{R},\label{eq:xt}
\end{equation}
where $\mathcal{K}:\mathbb{R}\to\mathbb{R}_{+}$ is a  measurable function and
$\left(L_{t}\right)_{t\in\mathbb{R}}$ is a two-sided L\'evy process
with the triplet $\mathcal{T}=(\gamma,\sigma^2,\nu)$.   As shown in \cite{rajput1989spectral}, under the conditions 
\begin{eqnarray}
\label{cond1}
\int_{\mathbb{R}}\int_{\mathbb{R}\setminus\{0\}}\left(\left|\mathcal{K}(s)x\right|^{2}\wedge 1\right)\nu(dx)\, ds<\infty,
\\
\label{cond2}
\sigma^2\int_{\mathbb{R}}\mathcal{K}^2(s)\,ds<\infty,
\\
\label{cond3}
\int_{\mathbb{R}}\left|\mathcal{K}(s)\left(\gamma+\int_{\mathbb{R}}x\left(1_{\{\left|x\mathcal{K}(s)\right|\leq1\}}-1_{\{\left|x\right|\leq1\}}\right)\nu(dx)\right)\right|\,ds<\infty 
\end{eqnarray}
 the stochastic integral in (\ref{eq:xt}) exists.  In what follows, we assume that \(\mathcal{K} \in L^{1}(\R) \cap L^{2}(\R)\)  and the L{\'e}vy measure \(\nu\) satisfies
  \begin{eqnarray}
\label{sec_mom}
 \int x^2\nu(dx)<\infty,
\end{eqnarray}
that is, the L\'evy process \(L\) has finite second moment.  In fact, (\ref{cond2}) is trivial in this case; 
 condition
 (\ref{cond1}) directly follows from the inequality
 \begin{eqnarray*}
\int_{\mathbb{R}}\int_{\mathbb{R}\setminus\{0\}}\left(\left|\mathcal{K}(s)x\right|^{2}\wedge 1\right)\nu(dx)\, ds 
&\leq&
\int_{\mathbb{R}}\int_{\mathbb{R}\setminus\{0\}}\left|\mathcal{K}(s)x\right|^{2}\nu(dx)\, ds \\
&=&
\int_{\mathbb{R}}\left(\mathcal{K}(s)\right)^{2}
ds 
\cdot 
\int_{\mathbb{R}\setminus\{0\}}x^{2}\nu(dx)\, ds.
\end{eqnarray*}
As to the condition  (\ref{cond3}), we have
\begin{multline*}
\int_{\mathbb{R}}\left|\mathcal{K}(s)\left(
\gamma
- \int_{\mathbb{R}} x 1_{\{\left|x\right|\leq1\}}  \nu(dx)
\right) 
+\int_{\mathbb{R}}x \mathcal{K}(s)
1_{\{\left|x\mathcal{K}(s)\right|\leq1\}}
\nu(dx)
\right|\,ds
\\
=
\int_{\mathbb{R}}\left|\mathcal{K}(s)
\E \left[ L_{1} \right]
- \int_{\mathbb{R}}x \mathcal{K}(s)
1_{\{\left|x\mathcal{K}(s)\right|>1\}}
\nu(dx)
\right|\,ds
\\
\leq 
\left| \E[L_{1}] \right|\int_{\R} \mathcal{K}(s)\, ds
+ 
\int_{\R}\int_{\R} x^{2}  \left(\mathcal{K}(s)\right)^{2} \nu(dx)\,ds.
\end{multline*}
In the sequel we assume that \(\mathcal{K} \in L^{1}(\R) \cap L^{2}(\R)\) and
\begin{eqnarray}
\label{sec_mom}
 \int x^2\nu(dx)<\infty.
\end{eqnarray}
Moreover, under the above assumptions, the process
$\left(Z_{t}\right)_{t\in\mathbb{R}}$ is strictly stationary with the characteristic function of the form
\begin{eqnarray}
\label{Four}
\Phi(u)  :=  \E\left[e^{\i uZ_{t}}\right] =\exp\left(\psi(u)\right),
\end{eqnarray}
 where 
 \begin{eqnarray*}
\Psi(u) := \int_{\R}\psi(u\,\mathcal{K}(s))\, ds
\end{eqnarray*}
and
\[
\psi(u):= \i u\gamma -\sigma^2u^2/2+\int_{\mathbb{R}}\left(e^{\i ux}-1-\i ux 1_{\{|x|\leq 1\}}\right)\nu(dx).
\]
Our main goal is the estimation of the parameters of the L\'evy process
$L$ from low-frequency observations of the process $Z$ given that
the function $\mathcal{K}$ is known.
\section{Mellin transform approach}
\label{melsec}
\subsection{Main idea}
\label{Mell}
Let \(L\)  be a L\'evy process  with the
L{\'e}vy triplet \((\mu,\sigma^2, \nu),\) where \(\nu\) is  an absolutely continuous w.r.t. to the Lebesgue measure on \(\mathbb{R}_{+}\) and satisfies \eqref{sec_mom}.  Denote by \(\nu(x)\) the density of \(\nu\) and set  \(\overline{\nu}(x):= x^2\nu(x).\)  For the sake of clarity we first assume that \(\sigma\) is known and  \(\nu\) is supported on \(\R_+,\) i.e. \(L\) is a sum of a Brownian motion and subordinator.  Set 
\[
\Psi_{\sigma}(u):=\Psi(u)+\frac{\sigma^2u^2}{2}\int_{\R} 
	\mathcal{K}^2(x)\, dx. 
\] 
It follows then 
\begin{eqnarray*}
	\Psi_{\sigma}''(u) 
&=&
	\int_{\R} \psi''\left(
		u \mathcal{K}(x) 
	\right)
	\cdot\mathcal{K}^2(x)\,
	dx
	=
	-\int_{\R} \mathcal{F}[\overline{\nu}]\left(
		u \mathcal{K}(x)
	\right) \cdot
	\mathcal{K}^2(x)\, dx,
\end{eqnarray*}
{where \(\mathcal{F}[\overline{\nu}]\) stands for the Fourier transform of \(\overline{\nu}.\)}
Next, let us compute the Mellin transform of \(\Psi_{\sigma}''\): 
\begin{eqnarray}
\nonumber
\M\left[
	\Psi_{\sigma}''
\right]
(z) &=& 
-
\int_{\R_{+}}
\left[
\int_{\R} \mathcal{F}[\overline{\nu}]\left(
		u \mathcal{K}(x)
	\right) \cdot
	\mathcal{K}^2(x)\, dx
\right]
	u^{z-1}
	du\\
	\nonumber
&=&
-
\int_{\R}
\left[
\int_{\R_{+}} \mathcal{F}[\overline{\nu}]\left(
		u \mathcal{K}(x)\,
	\right) \cdot
		u^{z-1}
du
\right]
		\mathcal{K}^{2}(x)\, dx\\
&=&
-\mathcal{M}[\mathcal{F}[\overline{\nu}]](z)
\cdot
\left[
\int_{\R}
\left(
	\mathcal{K}(x)	
\right)^{2-z}	 dx
\right],
		\label{ML}
\end{eqnarray}
for all \(z\) such that\(\int_{\mathbb{R}} (\mathcal{K}(x))^{{2-\Re (z)}}\, dx<\infty\) and \(\int_{\R_{+}} \left |\mathcal{F}[\overline{\nu}]\left(
		v
	\right)\right| \cdot
		v^{\Re(z)-1}
dv<\infty.\) 
Since 
$\overline{\nu} \in L_{1}(\mathbb{R}_{+})$, it holds 
\begin{eqnarray*}
\mathcal{M}[\mathcal{F}[\overline{\nu}]](z) & = & \int_{0}^{\infty}v^{z-1}\left[\int_{0}^{\infty}e^{\i xv}\overline{\nu}(x) dx\right]\, dv\\
 & = & \M \bigl[
e^{\i \cdot}  
\bigr](z) 
\cdot
\mathcal{M}\bigl[\overline{\nu}\bigr](1-z).
\end{eqnarray*}
Note that the Mellin transform \(\mathcal{M}\bigl[\overline{\nu}\bigr](1-z)\) is defined for all \(z\) with \(\Re(z)\in (0,1),\) provided \(\overline{\nu}\) is bounded at \(0.\) Next, using the fact that 
\[
\M [e^{\i \cdot}](z) =  \Gamma(z)\left[\cos(\pi z/2)+\i\sin(\pi z/2)\right] = \Gamma(z) e^{\i \pi z / 2}
\]
for all \(z\) with  $ \mathrm{Re}(z) \in (0,1)$ (see \cite{Ober},  5.1-5.2), we get  
\[
\mathcal{M}[\Psi_{\sigma}''](z)= Q(z) \cdot\mathcal{M}[\overline{\nu}](1-z), \quad \Re(z)\in (0,1),
\]
where 
\begin{eqnarray}
\label{eq: Q}
Q(z):= -\Gamma(z)e^{\i \pi z / 2}\int_{\R}
\left(
	\mathcal{K}(x)	
\right)^{{2-z}}	 dx.
\end{eqnarray}

Finally, we apply the inverse Mellin transform to get 
\begin{eqnarray}
\nonumber
\overline{\nu}(x)  & = & 
 \frac{1}{2\pi \i}\int_{c-\i \infty}^{c+\i \infty}\, \mathcal{M}[\overline{\nu}](z)  x^{-z} dz\\
 \label{eq: main_rel}
 & = &
 \frac{1}{2\pi \i}\int_{c-\i \infty}^{c+\i \infty}\, 
 \frac{\mathcal{M}[\Psi_{\sigma}''](1-z)} 
 {Q(1-z)}  x^{-z} dz
\end{eqnarray}
for \(c\in (0,1).\) The formula \eqref{eq: main_rel} connects the weighted Levy density \(\overline{\nu}\) to the characteristic exponent \(\Psi_{\sigma}\) of the process \(Z\) and forms the basis for our estimation procedure.
{
\begin{rem}
If \(\sigma^2\) is supposed to be unknown, one can estimate it by noting that 
for a properly chosen bounded kernel \(w\) with \(\mathrm{supp}(w)\subseteq [1,2]\) and \(\int_0^\infty w(u) \,du=1, \)
\begin{eqnarray*}
\int_{\R_+} w_n(u) \Psi'' (u)\, du&=&-\sigma^2 \int_{\R} 
	\mathcal{K}^2(x)\, dx\\
	&&\hspace{2cm}-\int_{\R}\int_{\R_+} w_n(u) \mathcal{F}[\overline{\nu}]\left(
		u \mathcal{K}(x)
	\right) \mathcal{K}^2(x) \, du\,dx
\\
&=&-\sigma^2 \int_{\R} 
	\mathcal{K}^2(x)\, dx\\
	&&\hspace{2cm}-\int_{\R}\int_{\R_+} w(u) \mathcal{F}[\overline{\nu}]\left(
		u U_n \mathcal{K}(x)
	\right) \mathcal{K}^2(x) \, du\,dx	
\end{eqnarray*}
with \(w_n(u):=U_n^{-1}w(u/U_n)\) and some sequence \(U_n\to \infty.\)
Suppose that \(|\mathcal{F}[\overline{\nu}](u)|\leq C(1+u)^{-\alpha}\)  for all \(u\geq 0\) and some constants \(\alpha>0,\) \(C>0,\) then 
\begin{eqnarray*}
\left|\int_{\R}\int_{\R_+} w(u) \mathcal{F}[\overline{\nu}]\left(
		u U_n \mathcal{K}(x)
	\right) \mathcal{K}^2(x) \, du\,dx\right |\leq \|w\|_\infty\int_{\R} \frac{\mathcal{K}^2(x)}{(1+U_n \mathcal{K}(x))^{\alpha}}
	\, dx\to 0
\end{eqnarray*}
as \(n\to \infty.\)   For example, in the case of a one-sided exponential kernel \(\K(x)=e^{-x}\I(x\geq 0),\) we derive
\begin{eqnarray*}
\int_{\R} \frac{\mathcal{K}^2(x)}{(1+U_n \mathcal{K}(x))^{\alpha}}
	\, dx=\frac{1}{U_{n}^{-2}}\int_{0}^{U_{n}}\frac{z}{(1+z)^{\alpha}}\,dz\lesssim 
	\begin{cases}
	U_n^{-\alpha}, & \alpha<2, \\
	U_n^{-2}\log (U_n) , & \alpha=2, \\
	U_n^{-2}, & \alpha>2,
	\end{cases}
\end{eqnarray*}
as \(n\to \infty.\)
\end{rem}
}
\begin{rem}
Let us remark on the general case where the jump part of \(L\) is not necessary a subordinator. In this case one can show that
\begin{multline*}
\frac{\mathcal{M}\left[\Psi_{\sigma}''(-\cdot)\right](u)+\mathcal{M}\left[\Psi_{\sigma}''(\cdot)\right](u)}{2}=
\\
{-} \left\{ \mathcal{M}\bigl[\overline{\nu}_{+}\bigr](1-z)+\mathcal{M}\bigl[\overline{\nu}_{-}\bigr](1-z)\right\} 
\\
{ \cdot\cos\left(
\frac{\pi z}{2}
\right)
\Gamma(z)
\cdot \int_{\R} \left(
\K(x)
\right)^{2-z} dx
}
\end{multline*}
and
\begin{multline*}
\frac{\mathcal{M}\left[\Psi_{\sigma}''(\cdot)\right](u)-\mathcal{M}\left[\Psi_{\sigma}''(-\cdot)\right](u)}{2\i}=
\\
{-}
\left\{ \mathcal{M}\bigl[\overline{\nu}_{+}\bigr](1-z)-\mathcal{M}\bigl[\overline{\nu}_{-}\bigr](1-z)\right\}
\\
{ \cdot\sin\left(
\frac{\pi z}{2}
\right)
\Gamma(z)
\cdot \int_{\R} \left(
\K(x)
\right)^{2-z} dx
},  
\end{multline*} 
where \(\overline{\nu}_{+}(x)=\nu(x)\cdot 1(x\geq 0)\) and \(\overline{\nu}_{-}(x)=\nu(-x)\cdot 1(x\geq 0).\) Using the above formulas, one can express \(\mathcal{M}\bigl[\overline{\nu}_{-}\bigr],\)    \(\mathcal{M}\bigl[\overline{\nu}_{+}\bigr]\)  in terms of \(\mathcal{M}\left[\Psi_{\sigma}''(-\cdot)\right],\) \(\mathcal{M}\bigl[\overline{\nu}_{-}\bigr]\) and apply the  Mellin inversion formula to reconstruct \(\overline{\nu}_{-}\) and \(\overline{\nu}_{+}.\)
 \end{rem}
\subsection{Estimation procedure}
Assume that the process \(Z\) is observed on the equidistant time grid \( \{\Delta,  2 \Delta, \ldots ,\) \( n\Delta\}.\) Our aim is to estimate the L{\'e}vy density \(\nu\) of the process \(L.\)
First we approximate the Mellin transform of the function\[\Psi_{\sigma}''(u) =
\frac{\Phi''(u)}{\Phi(u)}-\left(
	\frac{\Phi'(u)}{\Phi(u)}
\right)^{2}+\sigma^2\|\K\|_{L^2}^2
\]
 via
\begin{equation}
\mathcal{M}_{n}[\Psi_{\sigma}''](1-z):=\int_{0}^{U_{n}}\left[\frac{\Phi''_{n}(u)}{\Phi_{n}(u)}-\left(\frac{\Phi'{}_{n}(u)}{\Phi_{n}(u)}\right)^{2}+\sigma^2\|\K\|_{L^2}^2\right]u^{-z}\,du, \label{eq:emp_mel_psi}
\end{equation}
where 
\[
\Phi_{n}(u) := \frac{1}{n} \sum_{k=1}^{n} 
	\exp\{\i Z_{k \Delta} u\}
\]
and a sequence $U_{n}\to\infty$ as $n\to\infty.$ 
Second, by regularising the inverse Mellin transform, we define 
\begin{eqnarray}
\label{nunu}
\overline{\nu}_{n}(x):=  \frac{1}{2\pi \i}\int_{c-\i V_{n}}^{c+\i V_{n}}\,  \frac{\mathcal{M}_{n}[\Psi_{\sigma}''](1-z)} 
 {Q(1-z)}  x^{-z} dz
\end{eqnarray}
for some  \(c \in (0,1)\)  and some sequence $V_{n}\to\infty,$ which will be specified later.  In the next section we study the properties of the estimate  \(\overline{\nu}_{n}(x).\) In particular, we show that  \(\overline{\nu}_{n}(x)\) converges to \(\overline{\nu}(x)\) and derive the corresponding convergence rates.
\section{Convergence}
\label{conv}

Assume that the following conditions hold.
\begin{enumerate}
\item[(AN)] For some \(A>0\) and \(\alpha\in (0,1), {\gamma>0},  {c \in (0,1)}\)  the L\'evy density \(\nu\) fulfills
\begin{eqnarray}\label{assf}
\int_{\R} \left(
	1+|y|
\right)^{\alpha} 
\left| 
	\F[\overline{\nu}] (y)
\right|
dy \leq  A, 
\end{eqnarray}
\begin{eqnarray}
\label{assm}
\int_{\R} e^{\gamma |u|}\left|\mathcal{M}[\overline{\nu}]({c}+\i u)\right| du \leq A,
\end{eqnarray}
\begin{eqnarray}
\label{asbg}
\int_{\R_+}  (x\vee x^2)\, \nu(x)\,dx \leq A.
\end{eqnarray}
\end{enumerate}

\begin{theorem} \label{thm1}
Suppose that (AN) holds, \(\mathcal{K}\) is a nonnegative kernel with \(\mathcal{K}\in  L^1(\mathbb{R})\cap  L^2(\mathbb{R})\).
Denote $D_{j}(u):= ( \Phi_{n}^{(j)}(u)-\Phi^{(j)}(u)) / {\Phi(u)},$ $j=0,1,2,\ldots $ Let  for any real valued function \(f\) on \(\mathbb{R},\)  \(\left\|f\right\|_{U_n}:=\sup_{u\in[-U_{n},U_{n}]}\left|f(u)\right|.\) Fix some \(K>0\) and denote 
\begin{eqnarray*}
\mathcal{A}_K:=\left\{\max_{j=0,1,2}\left\|D_j\right\|_{U_n}\geq K \varepsilon_n\right\}, \qquad K \geq 0.
\end{eqnarray*}
Let \(\eps_{n}, U_{n}\) be two sequences of positive numbers such that \(\eps_{n}\to 0, U_{n} \to \infty\) as \(n \to \infty\), and moreover
\[
{K \varepsilon_n}\left(1+\left\| \Psi_{\sigma}'\right\|_{U_n}\right)\leq 1/2.
\]
Choosing \(\varepsilon_n\) and \(U_n\)  in such a way is always possible, since  \(\Psi_{\sigma}'(0)=\psi'(0)\int\mathcal{K}(s)\,ds\) is finite. 
Then on the set \(\overline{\mathcal{A}}_K\) the estimate \(\overline{\nu}_n(x)\) given by    \eqref{nunu} with the same \(c\in (0,1)\) as in \eqref{assm} satisfies
\begin{eqnarray}\nonumber
\sup_{x\in \mathbb{R}_+} \left\{x^{c}\left |\overline{\nu}_{n}(x) - \overline{\nu}(x)\right|\right\} & \leq  & \frac{1}{2\pi}\int_{\{|v|\leq V_{n}\}}\frac{\Omega_n}{|Q(1-c-\i v)|}\, dv+\frac{A}{2\pi}e^{-\gamma V_{n}},
\end{eqnarray}
{where \(Q\) as in \eqref{eq: Q}, \(V_n\) is a sequence of positive numbers  and}
\begin{multline*}
\Omega_n=2K\varepsilon_nU_n^{1-c} \left(2+\left\|\Psi_{\sigma}''\right\|_{U_n}+\left\|\Psi_{\sigma}'\right\|^2_{U_n}+3\left\|\Psi_{\sigma}'\right\|_{U_n}\right)
\\
+\left(A+\frac{2^{\alpha} A}{1-c}\right)\int_{\R}
\bigl[
\K(x)
\bigr]^{c+1}
\bigl[1+U_{n}\K(x)\bigr]^{-\alpha}\,
dx.
\end{multline*}
\end{theorem}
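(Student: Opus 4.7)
The plan is to decompose \(\overline{\nu}_n(x)-\overline{\nu}(x)\) along the vertical line \(\{\Re z = c\}\) into a truncation-in-\(v\) tail and a stochastic contribution, then estimate each part. Parametrizing \(z=c+\i v\) gives \(|x^{-z}|=x^{-c}\), and the identity \(\mathcal{M}[\Psi_\sigma''](1-z) = Q(1-z)\,\mathcal{M}[\overline{\nu}](z)\) derived in Section~\ref{Mell} yields
\begin{align*}
\overline{\nu}_n(x)-\overline{\nu}(x) &= \frac{1}{2\pi}\int_{-V_n}^{V_n}\frac{\mathcal{M}_n[\Psi_\sigma''](1-z)-\mathcal{M}[\Psi_\sigma''](1-z)}{Q(1-z)}\,x^{-z}\,dv \\
&\qquad -\,\frac{1}{2\pi}\int_{|v|>V_n}\mathcal{M}[\overline{\nu}](c+\i v)\,x^{-c-\i v}\,dv.
\end{align*}
Multiplying by \(x^c\) and using \eqref{assm}, the second integral contributes at most \((2\pi)^{-1}\int_{|v|>V_n}|\mathcal{M}[\overline{\nu}](c+\i v)|\,dv \leq (A/2\pi)\,e^{-\gamma V_n}\), which is the second term of the claim.

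The heart of the proof is a deterministic-in-\(v\) bound \(|\mathcal{M}_n[\Psi_\sigma''](1-z)-\mathcal{M}[\Psi_\sigma''](1-z)| \le \Omega_n\) valid on \(\overline{\mathcal{A}}_K\). Splitting the Mellin integral at \(u = U_n\) produces a stochastic piece on \([0,U_n]\) and a deterministic tail on \([U_n,\infty)\). For the tail, substitute \(\Psi_\sigma''(u) = -\int_\R \mathcal{F}[\overline{\nu}](u\mathcal{K}(x))\mathcal{K}^2(x)\,dx\), swap the order of integration, and change variables \(y = u \mathcal{K}(x)\); the resulting integral \(\int_{U_n \mathcal{K}(x)}^\infty y^{-c} |\mathcal{F}[\overline{\nu}](y)|\,dy\) is estimated by distinguishing the cases \(U_n \mathcal{K}(x) \ge 1\) and \(U_n \mathcal{K}(x) < 1\), using \(\|\mathcal{F}[\overline{\nu}]\|_\infty \le \|\overline{\nu}\|_{L^1} \le A\) (supplied by \eqref{asbg}) on the near-zero regime \([U_n\mathcal{K}(x),1]\) and the monotonicity of \((1+y)^{-\alpha}\) together with \eqref{assf} on \([\max\{1,U_n\mathcal{K}(x)\},\infty)\). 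This yields the deterministic contribution \((A+2^\alpha A/(1-c))\int_\R\mathcal{K}^{c+1}(x)(1+U_n\mathcal{K}(x))^{-\alpha}\,dx\) to \(\Omega_n\).

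For the stochastic part, write \(\Phi_n^{(j)} = \Phi(a_j + D_j)\) with \(a_0 = 1\), \(a_1 = \Psi'\), \(a_2 = \Psi'' + (\Psi')^2\). Direct algebraic manipulation produces
\[
\Psi'' - \left[\frac{\Phi_n''}{\Phi_n}-\left(\frac{\Phi_n'}{\Phi_n}\right)^2\right] = \frac{D_0\bigl(\Psi''-(\Psi')^2\bigr) + D_0^2\Psi'' - D_2(1+D_0) + 2\Psi'D_1 + D_1^2}{(1+D_0)^2}.
\]
On \(\overline{\mathcal{A}}_K\) we have \(\max_j|D_j| \le K\varepsilon_n\); combined with the hypothesis \(K\varepsilon_n(1+\|\Psi_\sigma'\|_{U_n}) \le 1/2\) this gives \(|1+D_0| \ge 1/2\) and controls the denominator. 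Bounding each numerator term by its sup on \([0,U_n]\), translating \(\Psi'\) and \(\Psi''\) into the \(\sigma\)-shifted quantities \(\Psi_\sigma'\) and \(\Psi_\sigma''\) (absorbing the known offsets \(\sigma^2 u \|\mathcal{K}\|^2\) and \(\sigma^2\|\mathcal{K}\|^2\)), and finally integrating against \(u^{-c}\) over \([0,U_n]\) to pick up the factor \(U_n^{1-c}\), produces the first term of \(\Omega_n\). Inserting this uniform-in-\(v\) bound into the first integral and dividing by \(|Q(1-c-\i v)|\) completes the proof.

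The main obstacle is the algebraic bookkeeping in the expansion of \(g_n - g\): carefully regrouping the cross-terms from the differences \(\Phi_n''/\Phi_n - \Phi''/\Phi\) and \((\Phi_n'/\Phi_n)^2 - (\Phi'/\Phi)^2\) so that the resulting uniform bound takes exactly the claimed form \(2K\varepsilon_n U_n^{1-c}(2+\|\Psi_\sigma''\|_{U_n}+\|\Psi_\sigma'\|^2_{U_n}+3\|\Psi_\sigma'\|_{U_n})\). Once the smallness hypothesis on \(K\varepsilon_n\) is invoked to bound \(1+D_0\) away from zero, the remaining estimates reduce to routine applications of the three parts of assumption (AN) and the triangle inequality.
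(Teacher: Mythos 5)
Your proposal is correct and follows the paper's proof essentially step for step: the same decomposition of \(x^{c}(\overline{\nu}_n-\overline{\nu})\) into a stochastic Mellin error on \([0,U_n]\), a deterministic tail \(\int_{U_n}^{\infty}\Psi_{\sigma}''(u)u^{-z}\,du\) bounded via the Fourier representation of \(\Psi_{\sigma}''\) together with \eqref{assf} and \eqref{asbg}, and a truncation bias controlled by \eqref{assm}. The only (cosmetic) difference is that you expand the log-derivative error in a single step over the denominator \((1+D_0)^2\), whereas the paper factors through the intermediate quantity \(G_1=\Psi_{\sigma,n}'-\Psi_{\sigma}'\) so that each fraction carries only one power of \(1+D_0\); bounding \((1+D_0)^2\ge 1/4\) naively yields a leading constant \(4K\varepsilon_n\) rather than the stated \(2K\varepsilon_n\), so the ``careful regrouping'' you allude to must in effect reproduce the paper's \(G_1\)-based bookkeeping.
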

\begin{rem}
Note that in case of \(\mathrm{supp}(\nu)\subseteq \R_+\), the sum 
 \(2+\left\|\Psi_{\sigma}''\right\|_{U_n}+\left\|\Psi_{\sigma}'\right\|^2_{U_n}+3\left\|\Psi_{\sigma}'\right\|_{U_n}\) can be  uniformly bounded. Indeed,
\begin{eqnarray*}
\bigl| 
	\psi'(u)-\sigma^2 u 
\bigr| &=& 
\bigl| 
	\i \mu + \int_{\R_{+}} \i x e^{\i ux} \nu(x) dx
\bigr| \leq \mu + \int_{\R_{+}} x \nu(x) dx \leq \mu + A,
\end{eqnarray*}
 by \eqref{asbg}. Analogously,
\begin{eqnarray*}
\bigl| 
	\psi''(u)-\sigma^2 
\bigr| &=& 
\left| 
	\int_{\R_{+}}  x^{2} e^{\i ux} \nu(x) dx
\right| \leq  \int_{\R_{+}} x^{2} \nu(x) dx \leq A.
\end{eqnarray*}
Therefore 
\begin{eqnarray*}
\bigl\|
	\Psi_{\sigma}'
\bigr\|_{U_{n}}
&=&
\left\|
	\int_{\R} (\psi' (u \K(x))-\sigma^2u) \K(x) dx
\right\|_{U_{n}}
\leq 
\left(
	\mu + A
\right)
\|\K\|_{L^1},\\
\bigl\|
	\Psi_{\sigma}''
\bigr\|_{U_{n}}
&=&
\left\|
	\int_{\R} (\psi'' (u \K(x))-\sigma^2) \K^{2}(x) dx
\right\|_{U_{n}}
\leq 
A
\|\K\|_{L^2}^2,
\end{eqnarray*}
where the integrals in the right-hand sides are bounded due to the assumption \(\mathcal{K}\in L^1(\mathbb{R})\cap  L^2(\mathbb{R})\).
\end{rem}
\begin{ex}
Consider a tempered stable L\'evy process \((L_t)\) with 
\begin{align}
\label{gamma}\nu(x)=x^{-\eta-1}\cdot e^{-\lambda x}%
, \quad x\geq0, \quad \eta\in (0,1),\quad \lambda>0.
\end{align}
Since
\begin{align*}
\mathcal{M}[\overline{\nu}](z)=\lambda^{\eta-z-1}\Gamma(z-\eta+1), \quad\mathsf{Re}(z)>\eta-1,
\end{align*}
we derive that \eqref{assm} holds  for all   $0<\gamma<\pi/2$ and $\alpha>0$  due to the asymptotic
properties of the Gamma function. Furthermore, 
\begin{eqnarray*}
\F[\overline{\nu}] (u)=(\i u-\lambda)^{-(2-\eta)}\Gamma(2-\eta)
\end{eqnarray*}
and hence \eqref{assf} holds for any \(0<\alpha<2-\eta.\) 
Moreover, \(\nu\) satisfies  \eqref{asbg}.
\end{ex}

Let us now estimate the probability of the event \(\mathcal{A}_K.\) The following result holds. 
\begin{theorem}
\label{ak_bound}
Suppose that the following assumptions are fulfilled.
\begin{enumerate}
\item  The kernel \(\mathcal{K}\) satisfies 
\begin{eqnarray}
\label{assfourierk}
\sum_{j=-\infty}^{\infty}\left|\mathcal{F}[\mathcal{K}]\left(2\pi\frac{j}{\Delta}\right)\right|\leq K^{*}
\end{eqnarray}
and
\begin{eqnarray}
\label{assdecayk}
(\mathcal{K}\star\mathcal{K})(\Delta j)
\leq \kappa_{0} \; |j|^{\kappa_{1}} e^{-\kappa_{2}|j|},  \quad \forall \; j\in \mathbb{Z}
\end{eqnarray}
for some positive constants \(K^*,\)\(\kappa_0,\kappa_1\) and \(\kappa_2,\) such that the all eigenvalues of the matrix   \( ((\mathcal{K}\star\mathcal{K})(\Delta (j-k))_{k,j\in \mathbb{Z}}\) are bounded from below and above by two finite positive constants.
\item The L{\'e}vy measure \(\nu\) satisfies 
\[
\int_{\left|x\right|>1}e^{Rx}\nu(dx)\leq A_{R}
\]
for some  \(R>0\)  and \(A_R>0.\) 
\end{enumerate}
 Then under the choice 
 \[\varepsilon_n=\sqrt{\frac{\log(n)}{ n}}\cdot \exp\left(C_{1} \sigma^2 U_n^2\, \int (\mathcal{K}(x))^2\,dx\right)
\]
with \(C_{1}=A/2,\)
it holds 
 for  any \(K>0\)
\begin{eqnarray*}
\P\Bigl(\mathcal{A}_K\Bigr)\leq  
\frac{
C_{2}
}{
	\sqrt{K}
}
\frac{
	\sqrt{U_{n}} n^{(1/4) - C_{3}K^{2}}
}
{
	\log^{1/4}(n)
}
,
\end{eqnarray*}
where the positive constants \(C_{1}, C_{2}\) may depend on \(K^*,\) \(A_R\) and \(\kappa_{i},\) \(i=1,2.\) {Hence by an appropriate choice of $K$ we can ensure that  $\P({\cal A}_K)\to 0$ as \(n\to \infty\).}
\end{theorem}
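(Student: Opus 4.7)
The plan is to control each of $\|D_j\|_{U_n}$ for $j\in\{0,1,2\}$ separately and conclude by a union bound. For fixed $j$ and $u$ the quantity $\Phi_n^{(j)}(u)-\Phi^{(j)}(u)$ is an empirical mean over $k=1,\dots,n$ of the centred complex variables $(\i Z_{k\Delta})^j e^{\i u Z_{k\Delta}}$, so $D_j(u)$ is this mean divided by $\Phi(u)$. The factor $|\Phi(u)|^{-1}=\exp(-\mathrm{Re}\,\Psi(u))$ grows at most like $\exp(\sigma^2 u^2\|\K\|_{L^2}^2/2+O(1))$, which is exactly the source of the exponential inflation built into $\varepsilon_n$.

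The principal technical obstacle, already flagged in the abstract, is that the sample $\{Z_{k\Delta}\}$ is dependent and there is no direct i.i.d.\ concentration available. My approach would be the standard truncation/decoupling device: write $Z_{k\Delta}=Z_{k\Delta}^{(T)}+R_{k\Delta}^{(T)}$ with $Z_{k\Delta}^{(T)}=\int_{k\Delta-T}^{k\Delta+T}\K(k\Delta-s)\,dL_s$, so that $\{Z_{k\Delta}^{(T)}\}_k$ is $m$-dependent for $m=\lceil 2T/\Delta\rceil$. The kernel assumption \eqref{assdecayk} forces $\mathrm{Var}(R_{k\Delta}^{(T)})$ to decay exponentially in $T$, while the eigenvalue bound on the covariance matrix $((\K\star\K)(\Delta(j-k)))_{j,k}$ keeps the full variance under control. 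Together with the exponential moment hypothesis $\int e^{Rx}\nu(dx)\leq A_R$, which yields $\P(\max_{k\leq n}|Z_{k\Delta}|>c\log n)$ polynomially small, this produces the needed quantitative exponential mixing for the continuous-time moving-average L\'evy process.

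On the event where all $|Z_{k\Delta}|\leq c\log n$, a Bernstein inequality for bounded $m$-dependent summands yields the pointwise estimate $\P(|D_j(u)|\geq K\varepsilon_n)\lesssim n^{-cK^2}$, the crucial cancellation between $\varepsilon_n$ and the per-step standard deviation $|\Phi(u)|^{-1}/\sqrt{n}$ being produced precisely by the factor $\exp(C_1\sigma^2 U_n^2\|\K\|_{L^2}^2)$. To globalise in $u$, I would discretize $[-U_n,U_n]$ at scale $\delta_n$ and treat the modulus of continuity with the naive Lipschitz estimate $|D_j'(u)|\lesssim n^{-1}\sum_k|Z_{k\Delta}|^{j+1}$, which is again uniformly controlled via \eqref{asbg} and the exponential moment of $\nu$.

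The rather unusual form $(C_2/\sqrt{K})\sqrt{U_n}\,n^{1/4-C_3K^2}/\log^{1/4}(n)$ of the final bound I would obtain by a Markov-type step applied to the half-power, $\P(\|D_j\|_{U_n}\geq K\varepsilon_n)\leq \E[\|D_j\|_{U_n}^{1/2}]/(K\varepsilon_n)^{1/2}$, estimating the half-moment by integrating the pointwise Bernstein bound across the discretization grid. The $K^{-1/2}$ prefactor and the residual $n^{1/4}\log^{-1/4}(n)$ emerge directly from the half-exponent combined with $\varepsilon_n=\sqrt{\log n/n}\cdot\exp(\cdot)$, while $\sqrt{U_n}$ reflects the length of the interval over which the supremum is taken. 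The hardest part throughout is the first step, namely the quantitative exponential mixing of the non-Markovian MA L\'evy process; once that is in hand, concentration and discretization are essentially routine.
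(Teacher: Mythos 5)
Your overall architecture matches the paper's: inflate by $|\Phi(u)|^{-1}\le\exp(\sigma^2u^2\|\K\|_{L^2}^2/2+O(1))$ (this is where $\varepsilon_n$'s exponential factor comes from), prove a pointwise Bernstein-type deviation bound for the dependent sum, then chain over a grid on $[-U_n,U_n]$ using the Lipschitz estimate $|e^{\i uZ}-e^{\i u^\star Z}|\le|Z||u-u^\star|$ and a Markov bound for the off-grid error. Where you genuinely diverge is in how the dependence is handled: the paper does not truncate to an $m$-dependent sequence but proves a covariance inequality $|\Cov(g(Z_S),g'(Z_{S'}))|\lesssim\max_{|l|>p}(\K\star\K)(l\Delta)\cdot(\cdots)$ for the MA process (Theorem~\ref{mixing_prop}, via characteristic functions, Lemma~\ref{lem:psi} and Parseval), and then feeds the resulting geometric decay into the Merlev\`ede--Peligrad--Rio exponential inequality for weakly dependent bounded variables. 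Your truncation route is plausible but would need two things you gloss over: (i) assumption \eqref{assdecayk} controls $(\K\star\K)(\Delta j)$, not the tail $\int_{|s|>T}\K^2(s)\,ds$, so the exponential decay of $\Var(R^{(T)}_{k\Delta})$ does not follow directly from the stated hypotheses; (ii) you must control $U_n\max_{k\le n}|R^{(T)}_{k\Delta}|$ uniformly, not just variances, since the error enters through $e^{\i uZ_{k\Delta}}$.

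The concrete gap is in your derivation of the final bound. A Markov step on the half power, $\P(\|D_j\|_{U_n}\ge K\varepsilon_n)\le\E[\|D_j\|_{U_n}^{1/2}]/(K\varepsilon_n)^{1/2}$, cannot produce the factor $n^{-C_3K^2}$: the numerator does not depend on $K$, so the $K$-dependence of your bound would be exactly $K^{-1/2}$ and nothing more, which contradicts the statement (and would make the theorem useless, since one needs $\P(\mathcal A_K)\to0$). In the paper the entire prefactor $K^{-1/2}\sqrt{U_n}\,n^{1/4}\log^{-1/4}(n)$ together with $n^{-C_3K^2}$ arises from optimizing the number $J$ of grid points: the union bound over the grid contributes $2J\exp\{-c_1x^2/(8n+2x\log n\log\log n)\}$, the off-grid Markov term contributes $4U_nn\E|Z_\Delta|/(xJ)$, and balancing the two with $x\asymp K\sqrt{n\log n}$ gives $J\asymp\sqrt{U_nn/x}\,e^{(\cdots)}$ and hence $\sqrt{U_nn/x}=\sqrt{U_n}\,n^{1/4}K^{-1/2}\log^{-1/4}(n)$ multiplied by the surviving half of the exponential, i.e.\ $n^{-c_3K^2/4}$. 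You need to replace your half-moment step by this optimization (or an equivalent deviation bound at level $K\varepsilon_n$ whose exponent actually carries the $K^2$).
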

\begin{ex}\label{cor1}
Consider the class of symmetric  kernels of the form
\begin{eqnarray}
\label{eq: xex_kernel}
\mathcal{K}(x) = |x|^{r} e^{-\rho|x|},
\end{eqnarray}
where \(r\) is a nonnegative integer and \(\rho>0.\) Let us check the assumptions of Theorem~\ref{ak_bound}.
We have 
\begin{eqnarray*}
\mathcal{F}[\mathcal{K}](u)=\Gamma(r+1)\left[\frac{1}{\left(\i u-\rho\right)^{r+1}}+\frac{1}{\left(-\i u-\rho\right)^{r+1}}\right]
\end{eqnarray*}
and \eqref{assfourierk} holds. Assumption~\eqref{assdecayk} is proved in Lemma~\ref{xre}. 
\end{ex}
\begin{cor}
\label{cor_xre}
Consider again a class of kernels of the form
\begin{eqnarray*}
\mathcal{K}(x) = |x|^{r} e^{-\rho|x|},
\end{eqnarray*}
where \(r\) is a nonnegative integer and \(\rho>0,\) {and assume that the L{\'e}vy measure \(\nu\) satisfies the set of assumptions (AN).} Then 
\begin{eqnarray*}
\Omega_n\lesssim K\varepsilon_n U_n^{1-c}+U_n^{-\alpha},\quad n\to \infty
\end{eqnarray*}
and 
\begin{eqnarray*}
\int_{\{|v|\leq V_{n}\}}\frac{1}{|Q(1-c-\i v)|}\, dv \lesssim 
\begin{cases}
V_{n}^{c+3/2}, & r=0,
\\
V_{n}^{c + 1}, & r\geq 1. 
\end{cases}
\end{eqnarray*}
As a result we have on \(\overline{\mathcal{A}}_K\) 
\begin{eqnarray*}
\sup_{x\in \mathbb{R}_+} \left\{x^{c}\left |\overline{\nu}_{n}(x) - \overline{\nu}(x)\right|\right\} 
\lesssim V_{n}^{\zeta }\left(
\varepsilon_n U_{n}^{(1-c)}+ U_{n}^{-\alpha}
\right)
+e^{-\gamma V_{n}}
\end{eqnarray*}
with \(\zeta=c+1+\I\{r=0\}/2\). By taking \(V_n=\varkappa \log (U_n)\) with \(\varkappa>\alpha/\gamma \) and  \(U_n={\theta}\log^{1/2} (n)\) for any \({\theta< \left(  A \int (\mathcal{K}(x))^2\,dx \right)^{-1/2}},\)
\begin{eqnarray*}
\sup_{x\in \mathbb{R}_+} \left\{x^{c}\left |\overline{\nu}_{n}(x) - \overline{\nu}(x)\right|\right\} 
\lesssim \log^{-\alpha/2}(n),\quad n\to \infty.
\end{eqnarray*}
\end{cor}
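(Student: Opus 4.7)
The plan is to reduce everything to Theorem~\ref{thm1} by substituting $\mathcal{K}(x)=|x|^r e^{-\rho|x|}$ into the two ingredients $\Omega_n$ and $\int|Q|^{-1}\,dv$, and then optimise $U_n$, $V_n$ and $\varepsilon_n$ in tandem with Theorem~\ref{ak_bound}.

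First, I would bound $\Omega_n$. By the Remark following Theorem~\ref{thm1}, the prefactor $2+\|\Psi_\sigma''\|_{U_n}+\|\Psi_\sigma'\|_{U_n}^2+3\|\Psi_\sigma'\|_{U_n}$ is uniformly bounded, so the stochastic summand is $\lesssim K\varepsilon_n U_n^{1-c}$. For the deterministic summand $\int_\R [\mathcal{K}(x)]^{c+1}[1+U_n\mathcal{K}(x)]^{-\alpha}\,dx$ I would split $\R$ along $\{x:U_n\mathcal{K}(x)>1\}$ and its complement: on the first set $[1+U_n\mathcal{K}]^{-\alpha}\leq (U_n\mathcal{K})^{-\alpha}$ so the integrand is bounded by $U_n^{-\alpha}[\mathcal{K}]^{c+1-\alpha}$, which is globally integrable by the exponential tail of $\mathcal{K}$; on the complement $\mathcal{K}(x)$ is small either because $|x|$ is large (exponential decay) or, when $r\geq 1$, because $|x|$ is near $0$ (polynomial vanishing), and a direct estimate yields the same $O(U_n^{-\alpha})$ rate. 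Hence $\Omega_n\lesssim K\varepsilon_n U_n^{1-c}+U_n^{-\alpha}$.

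Second, I would bound $\int_{|v|\leq V_n}|Q(1-c-\i v)|^{-1}\,dv$. The kernel integral $\int_\R [\mathcal{K}(x)]^{2-z}\,dx$ admits a closed form via $\int_0^\infty y^{a}e^{-\beta y}\,dy=\beta^{-a-1}\Gamma(a+1)$: it equals $2/[\rho(2-z)]$ for $r=0$ and $2[\rho(2-z)]^{-r(2-z)-1}\Gamma(r(2-z)+1)$ for $r\geq 1$. Stirling applied to $\Gamma(1-c-\i v)e^{\i\pi(1-c-\i v)/2}$, together with Stirling on the extra $\Gamma$-factor and the asymptotics of the complex power $[\rho(2-z)]^{-r(2-z)-1}$, produces pairwise cancellations of the exponential contributions, leaving $|Q(1-c-\i v)|$ decaying only polynomially in $|v|$: like $|v|^{-c-1/2}$ for $r=0$ and $|v|^{-c}$ for $r\geq 1$. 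Integrating the reciprocal over $|v|\leq V_n$ gives the two stated branches $V_n^{c+3/2}$ and $V_n^{c+1}$.

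Third, substituting these bounds into Theorem~\ref{thm1} yields
\[\sup_{x\in\mathbb{R}_+}\{x^{c}|\overline{\nu}_n(x)-\overline{\nu}(x)|\}\lesssim V_n^{\zeta}(\varepsilon_n U_n^{1-c}+U_n^{-\alpha})+e^{-\gamma V_n}.\]
With $V_n=\varkappa\log U_n$ and $\varkappa>\alpha/\gamma$, the term $e^{-\gamma V_n}=U_n^{-\gamma\varkappa}$ is dominated by the bias $U_n^{-\alpha}$, and $V_n^{\zeta}$ contributes only a polylogarithmic factor. Choosing $U_n=\theta\log^{1/2}(n)$ with $\theta<(A\int\mathcal{K}^2)^{-1/2}$ ensures, via Theorem~\ref{ak_bound}, that the factor $\exp(C_1\sigma^2 U_n^2\int\mathcal{K}^2)$ in $\varepsilon_n$ is a strictly sub-$\sqrt{n}$ power of $n$, so $\varepsilon_n U_n^{1-c}=o(U_n^{-\alpha})$ and the final rate is $U_n^{-\alpha}\sim\log^{-\alpha/2}(n)$. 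The principal obstacle is the $|Q|^{-1}$ estimate: the Mellin factor $\Gamma(z)e^{\i\pi z/2}$ is inherently asymmetric in $\operatorname{Im}(z)$, with $e^{\pm\pi|v|/2}$ type growth, so the pairwise cancellation with the Gamma factors coming from $\int[\mathcal{K}]^{2-z}\,dx$ has to be set up and tracked carefully in order to keep the contour integral polynomial in $V_n$.
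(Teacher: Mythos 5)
Your overall architecture coincides with the paper's: substitute the kernel into the two quantities appearing in Theorem~\ref{thm1}, then tune $U_n$, $V_n$, $\varepsilon_n$ against Theorem~\ref{ak_bound}. The one place where you genuinely diverge is the bound on $\Lambda_n=\int_{\R}[\K(x)]^{c+1}[1+U_n\K(x)]^{-\alpha}\,dx$. The paper changes variables $w=\K(x)$ separately on the two monotonicity branches of $\K$, studies the density $G=g_1'-g_2'$ of the pushforward near $w=0$ and near the maximum $w=r^re^{-r}$ by an asymptotic-iteration (Newton) expansion, and obtains the two-sided statement $J_1\lesssim U_n^{-c-1}$, $J_2\asymp U_n^{-\alpha}$. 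Your domination argument is much shorter and suffices for the one-sided conclusion: since $[1+U_n\K]^{-\alpha}\le (U_n\K)^{-\alpha}$ on your first set and $[\K]^{\alpha}\le U_n^{-\alpha}$ on its complement, the integrand is bounded by $U_n^{-\alpha}[\K]^{c+1-\alpha}$ throughout, and $[\K]^{c+1-\alpha}=|x|^{r(c+1-\alpha)}e^{-\rho(c+1-\alpha)|x|}$ is integrable because $c+1-\alpha>0$; you lose only the information that $U_n^{-\alpha}$ is the exact order of $J_2$. For the $|Q|^{-1}$ integral your route is the paper's: closed form for $\int(\K)^{2-z}\,dx$ via the Gamma function (justified by contour rotation), Stirling, and cancellation of the exponential factors; your exponents $|v|^{-c-1/2}$ ($r=0$) and $|v|^{-c}$ ($r\ge 1$) and the resulting $V_n^{c+3/2}$, $V_n^{c+1}$ match the paper exactly. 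One caution: the cancellation you rightly single out as the delicate point is only verified in the paper for $v\ge 0$ --- the displayed bound $e^{-\pi v/2}/|\Gamma(1-c-\i v)|\le v^{c-1/2}$ rests on $|\Gamma(a+\i b)|\gtrsim |b|^{a-1/2}e^{-\pi|b|/2}$ and fails as written for $v<0$, where $e^{-\pi v/2}$ grows while the kernel-integral factor stays polynomial and symmetric in $v$ --- so carrying out your program you would hit exactly the same sign issue on the lower half of the contour, and neither your sketch nor the paper resolves it beyond asserting the cancellation. The final parameter choices and the $\log^{-\alpha/2}(n)$ rate (modulo the harmless $(\log\log n)^{\zeta}$ factor coming from $V_n^{\zeta}$, which both you and the paper suppress) are identical to the paper's.
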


\subsection{Discussion}
The proof of Theorem~\ref{ak_bound} is based on some kind of exponential mixing  for  the general L\'evy-driven moving average processes of the form \eqref{eq:xt}. In fact, such mixing properties were established in the literature only for the processes \(Z\) corresponding to the exponential kernel function \(\mathcal{K},\) see, e.g. \cite{ilhe2015nonparametric}. The assumption of Theorem~\ref{ak_bound} may seem to be strong, but as shown above, are fulfilled for the family of kernels \eqref{eq: xex_kernel}.
\section{Numerical example}
\label{num}
\subsection{Simulation.}  Consider the integral \(
Z_{t}:=\int_{\mathbb{R}}\K(t-s)\, dL_{s}
\) with the kernel \(\K(x) = e^{-|x| } \) and  the L{\'e}vy process
\[L_{t}=L_{t}^{(1)} \I \{t>0\} - L_{-t}^{(2)} \I\{t<0\},\]
constructed from the independent compound Poisson processes 
\[L^{(1)}_{t} \eqd L^{(2)}_{t} \eqd \sum_{k=1}^{N_{t}} \xi_{k},\]
where \(N_{t}\) is a Poisson process with intensity \(\lambda\), and \(\xi_{1}, \xi_{2},...\) are independent r.v.'s
with standard exponential distribution. Note that 
 the L{\'e}vy density of the process \(L_{t}^{(1)}\) is  \(\nu(x)=\lambda e^{-x}.\)


 For \(k=1,2\), denote the  jump times of \(L_{t}^{(k)}\) by \(s^{(k)}_{1}, s^{(k)}_{2},...\) and the corresponding jump sizes  by \(\xi^{(k)}_{1}, \xi^{(k)}_{2},...\) Then 
\begin{eqnarray*}
Z_{t} = \sum_{j=0}^{\infty}  \K(t-s^{(1)}_{j})  \xi^{(1)}_{j}
-
\sum_{j=0}^{\infty}  \K(t+s^{(2)}_{j})  \xi^{(2)}_{j}.
\end{eqnarray*}
In practice, we truncate both series in the last representation by finding a value \(x_{max}:= \max_{x\in\R_{+}}\{\K(x)>\alpha\}\) for a given level \(\alpha\). Let
\begin{eqnarray*}
\tilde{Z}_{t} &=& \sum_{k \in K^{(1)}} \K(t-s^{(1)}_{j})  \xi^{(1)}_{j}
-
\sum_{k \in K^{(2)}}  \K(t+s^{(2)}_{j})  \xi^{(2)}_{j}, 
\end{eqnarray*}
where
\begin{eqnarray*}
K^{(1)}&:=& \left\{ k:  \max(0, t - x_{max}) < s^{(1)}_{k} < t+ x_{max}\right\}, \\
K^{(2)}&:=& \left\{ k:  0 < s^{(2)}_{k} < \max(0, -t+x_{max}) \right\}.
\end{eqnarray*}
For simulation study, we take \(\lambda=1,\) \(\alpha=0.01\) (and therefore \(x_{\max}=6.908\)). Typical trajectory of the process \(\tilde{Z}_{t}\) is presented on Figure~\ref{fig1}.
 \begin{figure}
\begin{center}
\includegraphics[width=1\linewidth ]{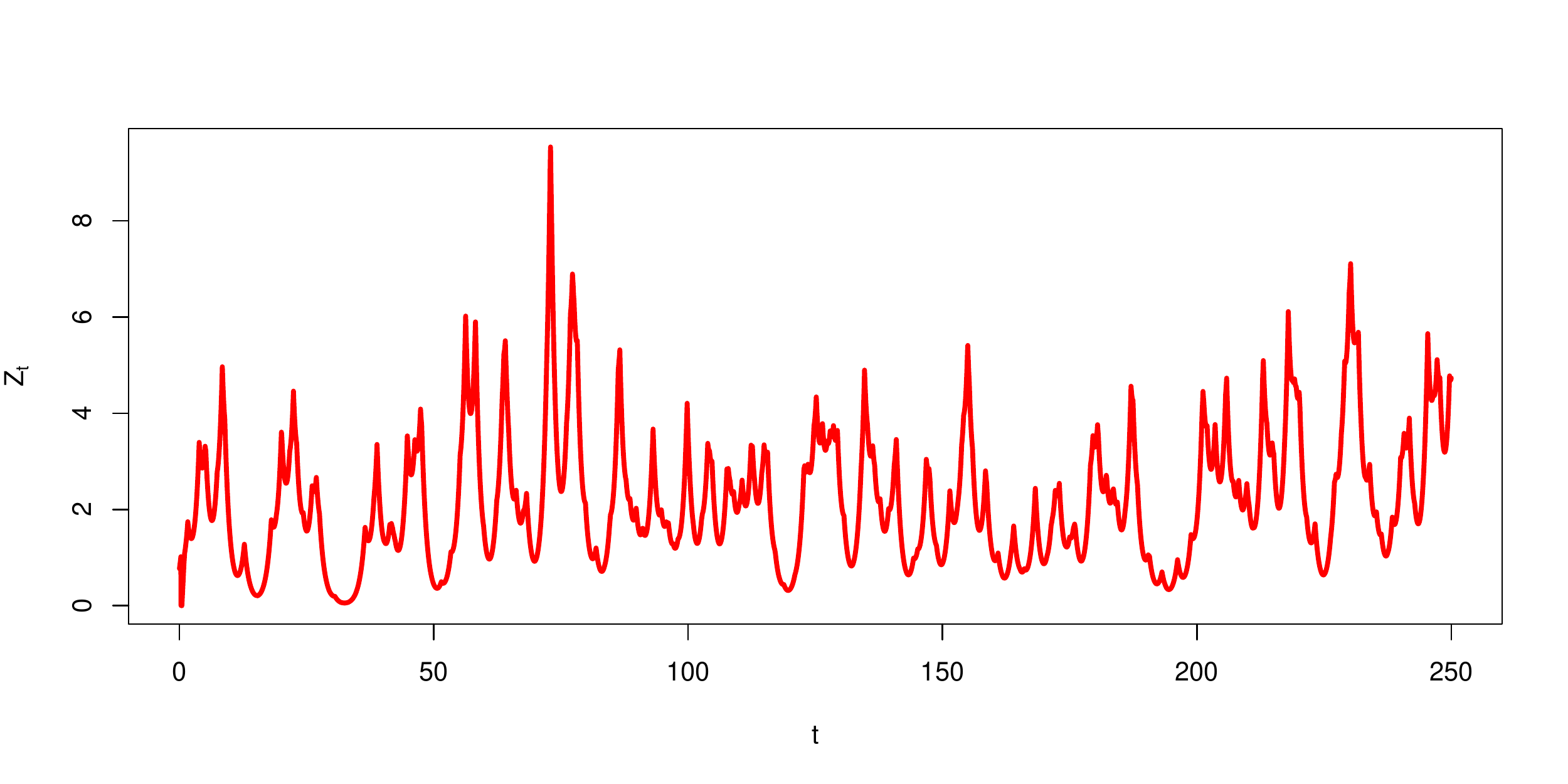}\caption{\label{fig1} Typical trajectory of the process \(Z_{t}\) constructed from the compound Poisson process with positive jumps.}
\end{center}
\end{figure}
\subsection{General idea of the estimation procedure.} In practice  the estimation procedure described in Section~\ref{Mell} can be slightly simplified under the assumption that the L{\`e}vy process \(L\) has no drift. In this case, one can consider the first derivative of the function \(\Psi_{\sigma}(u)\) instead of the second, and get that 

\[
\mathcal{M}[\Psi_{\sigma}'](z)= \widetilde{Q}(z) \cdot\mathcal{M}[\overline{\nu}](1-z), \quad \Re(z)\in (0,1),
\]
where 
\[\widetilde{Q}(z)  = \i \Gamma(z) \exp\{ \i \pi z/2\} 
\int_{\R}
\left(
	K(x)	
\right)^{1-z} dx.\]
The estimation scheme mainly follows the original idea: we first estimate the Mellin transform of the function \(\Psi_{\sigma}'\), and then infer on the L{\'e}vy measure \(\nu\) by applying the Mellin transform techniques. Below we describe these steps in more details. 

\textbf{Estimation of the Mellin transform of \(\Psi'(\cdot)\).} The most natural estimate is
\begin{eqnarray}
\label{MM}
\mathcal{M}_{n}[\Psi'](1-z):=\i  \int_{0}^{U_{n}}
\frac{\mean(Z_{k\Delta} e^{\i u Z_{k \Delta }})}
{
\mean(e^{\i u Z_{k\Delta}})
}
u^{-z} du.
\end{eqnarray}
In order to improve the numerical rates of convergence of the integral involved in \eqref{MM}, we slightly modify this estimate: 
\begin{multline*}
\mathcal{M}_{n}[\Psi'](1-z):=\i \int_{0}^{U_{n}}
\Bigl[
\frac{\mean(Z_{k\Delta} e^{\i u Z_{k \Delta }})}
{
\mean(e^{\i u Z_{k\Delta}})
}
-
\mean(Z) e^{\i u}
\Bigr]
u^{-z}du\\  + 2 \i  \lambda \Gamma(1-z) \exp\{ \i \pi (1-z)/2\}.
\end{multline*}
Note that \(\mathcal{M}_{n}[\Psi'](1-z)\)  is also a consistent estimate of \(\mathcal{M}[\Psi'](1-z)\) (since \(\mean(Z) \to 2 \lambda\)), but involves the integral with better convergence properties.
In our case \(\mathcal{M}[\overline{\nu}](z) = \lambda \Gamma(1+z)\),  and therefore the Mellin transform of the function \(\Psi'\) is equal to 
\begin{eqnarray*}
	\M[\Psi'](1-z) =
\widetilde{Q}(1-z) \cdot\mathcal{M}[\overline{\nu}](z)
=
	2 \i \lambda \frac{ \Gamma(1-z) \Gamma(1+z)}{z} e^{ \i \pi (1-z)/2}.
\end{eqnarray*}
We estimate \(\M[\Psi'](1-z)\) for \(z=c+\i v_k\), where \(c\) is fixed and \(v_k, \; k=1,\ldots, K,\) are taken on the equidistant grid from \((-V_n)\) to \(V_{n}\) with step \(\delta = 2 V_n / K.\) Typical behavior of the  the Mellin transform \(\mathcal{M}[\Psi'](1-z)\) and its estimate \(\mathcal{M}_{n}[\Psi'](1-z)\) is illustrated by Figure~\ref{fig3}. 
 \begin{figure}
\begin{center}
\includegraphics[width=1\linewidth ]{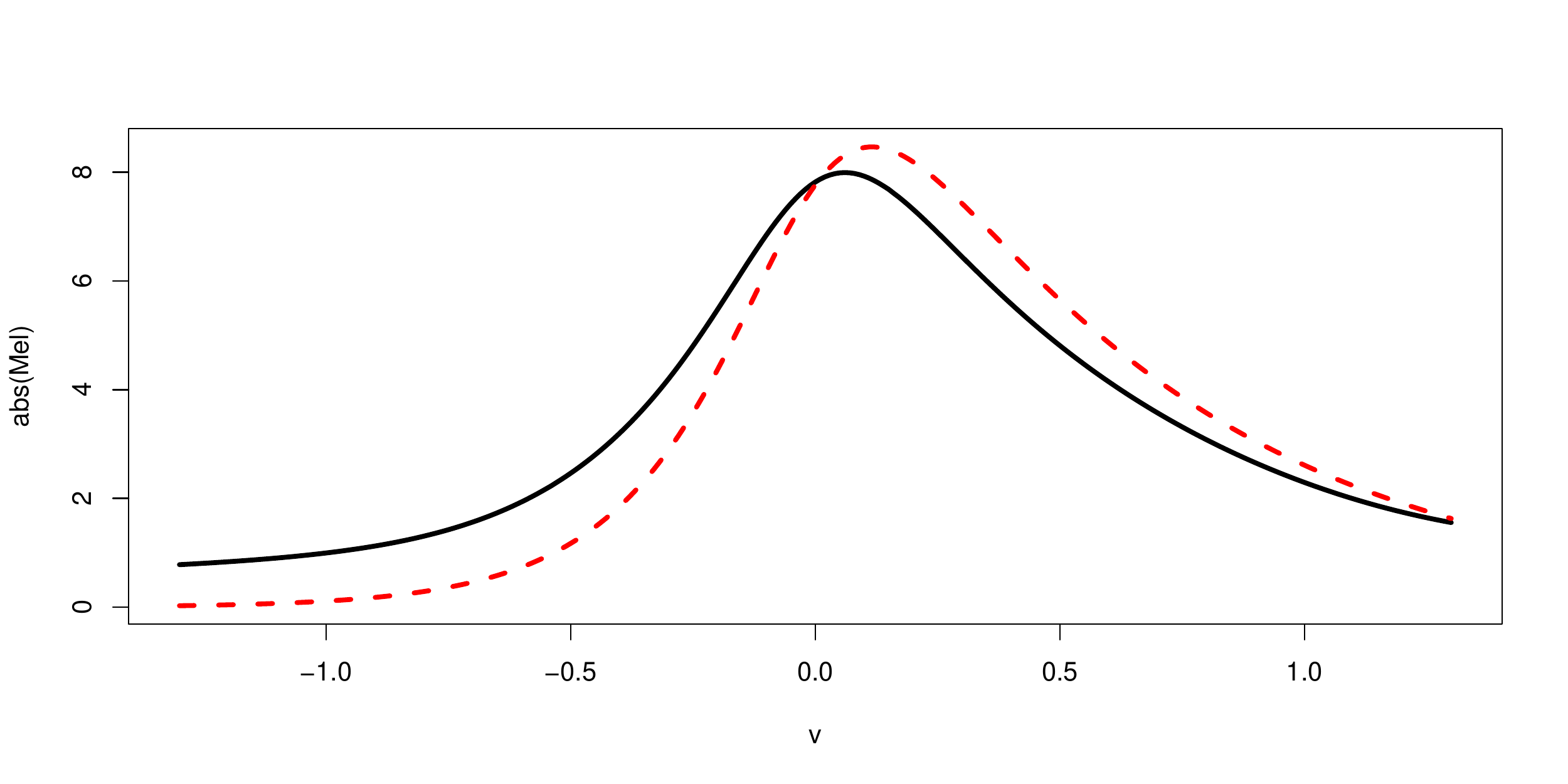}\caption{\label{fig3} 
 Absolute values of the empirical (black solid) and theoretical (red dashed) Mellin transforms of the function $\Psi'(\cdot)$ depending on the imaginary part of the argument.
}
\end{center}
\end{figure}
 \begin{figure}
\begin{center}
\includegraphics[width=1\linewidth ]{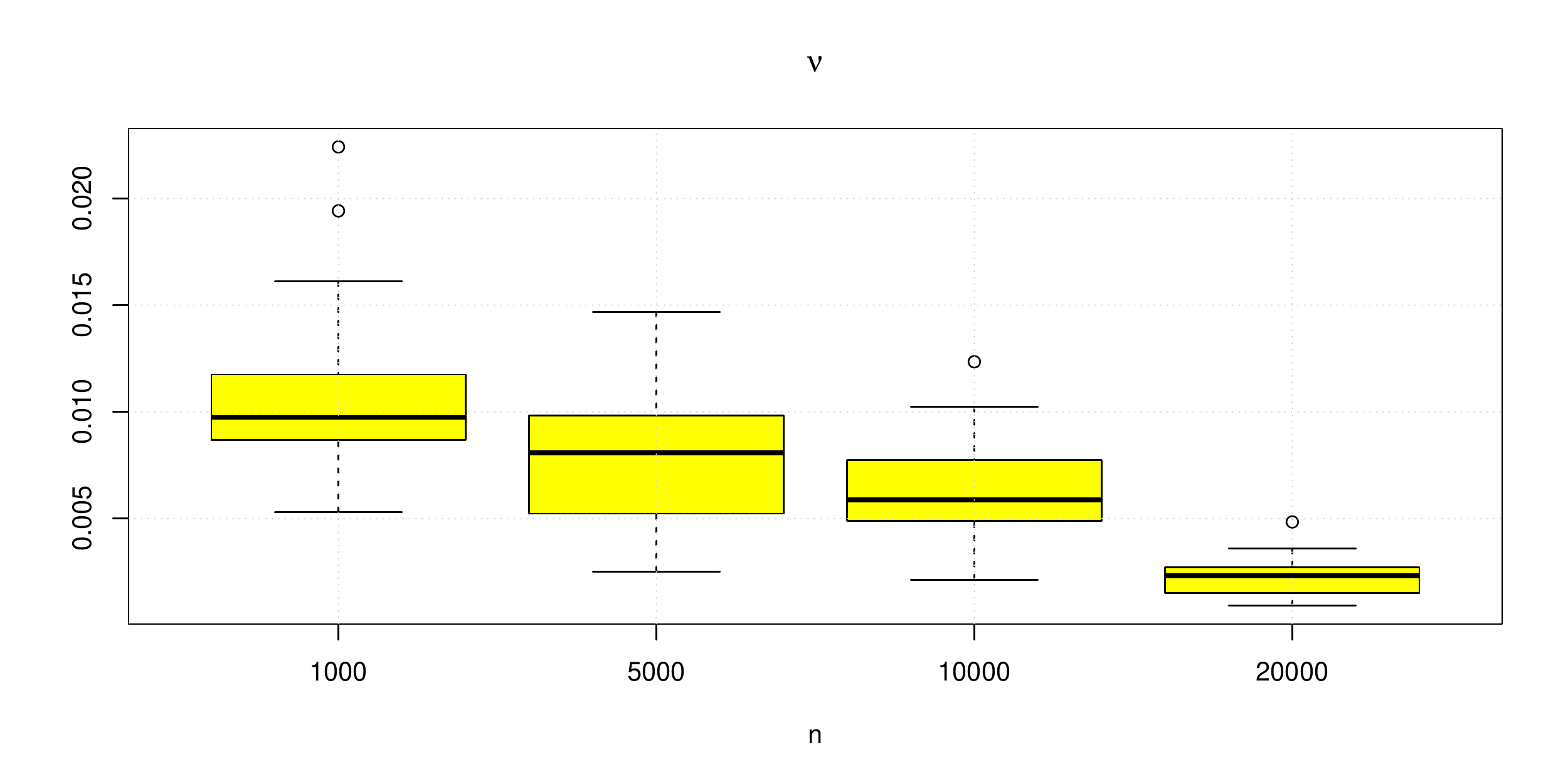}\caption{\label{fig5} 
Boxplot of the estimate \(\RR(\tilde\nu_{n}^{\star})\) based on 20 simulation runs.
}
\end{center}
\end{figure}

%
\textbf{Estimation of \(\nu(x)\).}
Finally, we estimate  the L{\'e}vy density \(\nu(x)\) by 
\[
\tilde{\nu}_{n}(x):=   \frac{\delta}{2 \pi x}
\sum_{k=1}^{K}
\Re \left\{ 
\frac{
 	\mathcal{M}_{n}[\Psi'](1-c-\i v_{k})
} 
{
	\widetilde{Q}(1-c - \i v)}  \cdot
 x^{-(c+\i v_{k})}
 \right\}
\]
and measure  the quality  of this estimate  by the \(L^{2}\)-norm on the interval \([1,3]:\)
\begin{eqnarray*}
	\RR (\tilde\nu_{n}) = 
	\int_{1}^{3} \left(
	\tilde\nu_{n}(x) - \nu(x)
\right)^{2} dx.
\end{eqnarray*}
To show the convergence of this estimate, we made simulations with different values of \(n\). The parameters \(U_n\) and \(V_n\) are chosen by numerical optimization of \(\RR (\tilde\nu_{n})\). The results of this optimization, for different values of \(n\), as well as the means and variances of the estimate \(\tilde\nu_n\) based on 20 simulation runs, are given in the next table.

\begin{center}
\begin{tabular}{ | c | c | c || c | c |}
 \hline
  $n$ & $U_n$ & $V_n$ & $ \mean\left( \RR(\tilde\nu_n) \right)$ & $ \Var\left( \RR(\tilde\nu_n) \right)$\\
  \hline			
    1000 &  0.4  & 1.1 & 0.0109 & $1.62*10^{-5}$ \\
     5000 &  0.4  & 1.2 &  0.0079 & $9.07*10^{-6}$ \\
     10000 & 0.5 & 1.3 & 0.0063 &  $6.56*10^{-6}$\\
     20000 &  0.3 & 1.3 &  0.0023 &  $9.15*10^{-7}$\\
  \hline  
\end{tabular}
\end{center}
The boxplots of this estimate based on 20 simulation runs are presented on Figure~\ref{fig5}. 

\appendix

\section{Proof of Theorem~\ref{thm1}}
Denote $G_{j}(u)=\Psi_{\sigma,n}^{(j)}(u)-\Psi_{\sigma}^{(j)}(u),$ \(j=1,2,\)
where 
\[
\Psi_{\sigma,n}(u)=\log\Phi_{n}(u)+\frac{\sigma^2u^2}{2} \int_{\R} \K^2(x)\,dx.
\]  
Then 
\begin{eqnarray}
G_{1}(u)&=&\frac{D_{1}(u)-D_{0}(u)\Psi_{\sigma}'(u)}{1+D_{0}(u)},
\label{R11}\\
G_{2}(u) & = & \frac{\left(\Psi_{\sigma}''(u)+\left(\Psi_{\sigma}'(u)\right)^{2}+\Psi_{\sigma}'(u)G_{1}(u)\right)D_{0}(u)}{1+D_{0}(u)}
\nonumber\\
 &  & -\frac{\left(2\Psi_{\sigma}'(u)+G_{1}(u)\right)D_{1}(u)}{1+D_{0}(u)}+\frac{D_{2}(u)}{1+D_{0}(u)}.\label{R21}
\end{eqnarray}
We have 
\begin{eqnarray*}
\overline{\nu}_{n}(x) -\overline{\nu}(x) & = & \frac{1}{2\pi \i
}\int_{c-\i V_{n}}^{c+\i V_{n}}\left[\frac{\mathcal{M}_{n}[\Psi_{\sigma}''](1-z)-\mathcal{M}[\Psi_{\sigma}''](1-z)}{Q(1-z)}\right]x^{-z}\, dz\\
 &  & -\frac{1}{2\pi x} \int_{\{|v |\geq V_{n}\}}\mathcal{M}[\overline{\nu}](c+\i v)\, x^{-(c+\i v)}\, dv
\end{eqnarray*}
and  
\begin{eqnarray}\nonumber
x^{c}\left(\overline{\nu}_{n}(x) - \overline{\nu}(x)\right) & = & \frac{1}{2\pi}\int_{\{|v|\leq V_{n}\}}\frac{R_{1}(v)+R_{2}(v)}{Q(1-c-\i v)}x^{-\i v}\, dv\\
\label{nudiff}
 &  & \hspace{0.3cm}-\frac{1}{2\pi}\int_{\{|v|\geq V_{n}\}}\mathcal{M}[\overline{\nu}](c+\i v)\, x^{-\i v}\, dv,
\end{eqnarray}
where
\begin{eqnarray*}
R_{1}(v) := \int_{0}^{U_{n}}G_{2}(u) u^{
-c-\i v }\, du
\end{eqnarray*}
and 
\[
R_{2}(v):=- \int_{U_{n}}^{\infty}\Psi_{\sigma}''(u)\, u^{-c-\i v}\, du.
\]
We have on \(\overline{\mathcal{A}_K},\) under the assumption \(K\varepsilon_n(1+\left\| \Psi_{\sigma}'\right\|_{U_n})\leq1/2,\) that the denominator of the fractions in \(G_{1}\) and \(G_{2}\) can be lower bounded as follows:
\begin{eqnarray*}
\min_{u \in [-U_{n}, U_{n}]} \left|
1 +  D_{0} (u)
\right| 
\geq 
1 - \max_{u \in [-U_{n}, U_{n}]} \left| 
D_{0} (u) 
\right| \geq
1- K \eps_{n}
\geq 1/2.
\end{eqnarray*}
Therefore, 
\begin{eqnarray*}
 \left\| G_{1}\right\|_{U_n}&\leq & 2K\varepsilon_n \left(1+\left\| \Psi_{\sigma}'\right\|_{U_n}\right)\leq 1
\\
\left\| G_{2}\right\|_{U_n}&\leq & 2K\varepsilon_n \left(1+\left\|\Psi_{\sigma}''\right\|_{U_n}+\left\|(\Psi_{\sigma}')^2\right\|_{U_n}\right.
\\
&& +\left.(1+\left\|\Psi_{\sigma}'\right\|_{U_n})\left\| G_{1}\right\|_{U_n}+2\left\|\Psi_{\sigma}'\right\|_{U_n}\right),
\end{eqnarray*}
Thus 
\begin{eqnarray*}
\left|R_{1}(v)\right|\leq 2KU_n^{1-c}\varepsilon_n \left(2+\left\|\Psi_{\sigma}''\right\|_{U_n}+\left\|\Psi_{\sigma}'\right\|^2_{U_n}+3\left\|\Psi_{\sigma}'\right\|_{U_n}\right). 
\end{eqnarray*}
Since 
\[
\Psi_{\sigma}''(u)=  -  \int_{\infty}^{\infty} \K^{2}(x) \cdot \F[\overline{\nu}](u\K(x))\,dx,
\]
it holds  for any \(z \in \C\) 
\begin{eqnarray*}
\int_{U_{n}}^{\infty}\Psi_{\sigma}''(u)u^{-z}\,dy & = &-   \int_{-\infty}^{\infty} \K^{2}(x)\left[\int_{U_{n}}^{\infty}\F[\overline{\nu}](u\K(x))u^{-z}\,du\right]\,dx\\
 & = & -  \int_{-\infty}^{\infty} \left[ \K(x)\right]^{z+1}\left[\int_{U_{n}\K(x)}^{\infty}\F[\overline{\nu}](v)v^{-z}\,dv\right]\,dx.
\end{eqnarray*}
Next,  for any fixed \(x \in \R,\) we can upper bound the inner integral in the right-hand side of the last formula:
\begin{multline*}
\left|\int_{U_{n} \K(x)}^{\infty}\F[\overline{\nu}](v)v^{-z}\,dv\right|\\
\leq\left(1+U_{n}\K(x)\right)^{-\alpha}\cdot
\int_{0}^{\infty}v^{\mathrm{-Re}(z)}(1+v){}^{\alpha}\left|\F[\overline{\nu}](v)\right|\,dv.
\end{multline*}
%
Due to \eqref{assf} 
 we get that for any \(z\) with \(\Re(z)\in (0,1)\) it holds
\begin{eqnarray*}
\int_{0}^{\infty}v^{-\mathrm{Re}(z)}(1+v)^{\alpha}\left|\F[\overline{\nu}](v)\right|\,dv < \frac{\bar\delta}{1-\mathrm{Re}(z)}+A
\end{eqnarray*}
with \(\bar\delta=2^{\alpha} \int_{\R_{+}}x^{2} \nu(x)dx \leq 2^{\alpha}A\) due to \eqref{asbg}.
Finally, we conclude that 
\begin{eqnarray*}
\left|
R_{2}(v)
\right|:= 
\left|
	\int_{U_{n}}^{\infty}\Psi_{\sigma}''(y)y^{-c-\i v}\,dy
\right|
\leq
\left(\frac{\bar\delta}{1-c}+A\right)
\\
\times\int_{\R}
\left[
\K(x)
\right]^{c+1}
\left(1+U_{n}\K(x)\right)^{-\alpha}
dx.
\end{eqnarray*} 
Now an upper bound for the last term in \eqref{nudiff} follows from the  assumption on the Mellin transform of the function \(\bar\nu\). Indeed, since \eqref{assm} is assumed, it holds
\begin{multline*}
	\left| 
		\int_{\{|u|\geq V_{n}\}}\mathcal{M}[\overline{\nu}](c+\i u)\, x^{-\i u}\, du
	\right| 
	\\ \leq
	e^{-\gamma V_{n}}
	\int_{\{|u|\geq V_{n}\}}
	e^{\gamma V_{n}}\left| 
		\mathcal{M}[\overline{\nu}](c+\i u)
	\right| \, du
	 \leq 
	A 	e^{-\gamma V_{n}}.
\end{multline*}
This observation completes the proof.
\section{Proof of Corollary~\ref{cor_xre}}
{
For the sake of simplicity we consider the case \(\rho=1.\) We divide the proof into several steps. For the sake of simplicity we assume that either the kernel \(\K\) is symmetric or is supported on \(\R_+,\) so that it suffices to study the integral over \(\R_+.\)
}
\paragraph{\textbf{1. \textit{Upper bound for \(\Lambda_{n}:=\int_{\R_{+}}
\bigl[
\K(x)
\bigr]^{c+1}
\bigl[1+U_{n}\K(x)\bigr]^{-\alpha}\,
dx\)}}}
Note that the function \(\K(x) = x^{r} e^{-x}\) has two intervals of monotonicity on \(\R_{+}\): \([0, r]\) and \([r, \infty).\) Denote the corresponding inverse functions by \(g_{1}: [0,r^{r} e^{-r}] \to [0,r]\) and \(g_{2}: [0,r^{r} e^{-r}] \to [r,\infty).\) Then 
\begin{eqnarray*}
\Lambda_{n} 
&=& \left(
\int_{0}^{r} + \int_{r}^{\infty}
\right) \left[\K(x)\right]^{c+1}\left[1+U_{n}\K(x)\right]^{-\alpha}\,dx\\
&=&
\int_{0}^{r^{r}e^{-r}}
w^{c+1}\left(
	1+ U_{n} w
\right)^{-\alpha} g_{1}'(w)
dw\\
&& \hspace{2cm}
+
\int_{r^{r}e^{-r}}^{0}
w^{c+1}\left(
	1+ U_{n} w
\right)^{-\alpha} g_{2}'(w)
dw\\
&=&\int_{0}^{r^{r}e^{-r}}
w^{c+1}\left(
	1+ U_{n} w
\right)^{-\alpha} G(w)
dw\\
&=& U_{n}^{-c-2}\Biggl(
\int_{0}^{1}
+
\int_{1}^{r^{r} e^{-r}U_{n}}
\Biggr)
 y^{c+1}\left(1+y\right)^{-\alpha}   \cdot G(y/U_{n})\,dy\\
 && \hspace{7cm}
 =: J_{1}+J_{2},
\end{eqnarray*}
where \(G(\cdot) = g_{1}'(\cdot) - g_{2}'(\cdot).\)  In what follows, we separately analyze the summands \(J_{1}\) and \(J_{2}.\)

\paragraph{\textbf{1a. \textit{Upper bound for \(J_{1}\).}}} Clearly, the behavior of the function \(G(\cdot)\) at zero is crucial for the analysis of \(J_{1}\). Since \(\K(g_{1}(y))=y\) for any \(y \in [0, r^{r} e^{-r}],\) we get \(g_{1}(0)=0\) and moreover as \(y \to 0,\)
\begin{eqnarray*}
g_{1}'(y) = \frac{1}{\K'(g_{1}(y))}=\frac{1}{[g_{1}(y)]^{r-1} e^{-g_{1}(y)} \left(r - g_{1}(y)\right)} \asymp \frac{1} {r [g_{1}(y)]^{r-1}}.
\end{eqnarray*}
Analogously, due to \(\K(g_{2}(y))=y\) for any \(y \in [0, r^{r} e^{-r}],\) we conclude that  \(\lim_{y  \to 0}g_{2}(y)=+\infty\), and as \(y \to 0\)
\begin{multline*}
g_{2}'(y) =\frac{1}{[g_{2}(y)]^{r-1} e^{-g_{2}(y)} \left(r - g_{2}(y)\right)}\\ 
 \asymp \frac{-1} {[g_{2}(y)]^{r}e^{-g_{2}(y)} } = \frac{-1}{\K(g_{2}(y))} = \frac{-1}{y}.
\end{multline*}

For further analysis of the asymptotic behaviour of \(g_{1}(\cdot)\)  we apply the asymptotic iteration method.  We are interested in the   behaviour of the solution \(g_{1}(y)\) of the equation 
\[f(x):=x^{r} e^{-x}-y=0\] 
as \(y \to 0\). Note that  the distinction between the solutions is in the asymptotic behaviour as \(y \to 0\):  \(g_{1}(y) \to 0\), \(g_{2}(y) \to \infty\). Let us iteratively apply the recursion
\begin{eqnarray*}
\varphi_{n+1} = \varphi_{n} - \frac{f(\varphi_{n})}{f'(\varphi_{n})}
=
 \varphi_{n} - \frac{\varphi_{n}^{r}e^{-\varphi_{n}}-y}{\varphi_{n}^{r-1}e^{-\varphi_{n}}\left(
	r -\varphi_{n}
\right)}, \qquad n=1,2,...
\end{eqnarray*}
Motivated by the power series expansion of the function \(e^{-x}\) at zero, 
\begin{eqnarray*}
x^{r}e^{-x} = x^{r} - x^{r+1}+\frac{1}{2} x^{r+2}+ o(x^{r+2}),
\end{eqnarray*}
we take for the initial approximation of \(g_{1}(y)\), the function \(\varphi_{0}= y^{1/r}\).
Then 
\begin{eqnarray*}
\varphi_{1}(y) &=& 
y^{1/r} - \frac{y e^{-y^{1/r}}-y}{y^{(r-1)/r}e^{-y^{1/r}}\left(
	r -y^{1/r}
\right)}
\\
&=&
y^{1/r}
\left( 
1- 
\frac{ e^{-y^{1/r}}-1}{
	e^{-y^{1/r}}\left(
	r -y^{1/r}
\right)}
\right)\\
&=&
y^{1/r}
+O(y^{2/r}).
\end{eqnarray*}
Finally, we conclude that as \(y \to 0\),
\begin{eqnarray*}
G(y) = \frac{1}{r y^{(r-1)/r}}  
\left(
	1 + o(1) 
\right)
+ \frac{1}{y} 
\left(
	1 + o(1) 
\right)
=
\frac{1}{y}
\left(
	1 + o(1) 
\right).
\end{eqnarray*}
Therefore $J_{1}$  can be upper bounded as follows: 
\begin{eqnarray*}
J_{1}
&\leq&
C_{3} U_{n}^{-c-1}\int_{0}^{1}
 y^{c}\left(1+y\right)^{-\alpha}
\left(
1 + o(1)
\right)\,dy.
\end{eqnarray*}
The integral in the right-hand side converges iff \(\int_{0}^{1}y^{c}dy<\infty\). Since \(c\in (0,1),\) we get \(J_{1}\lesssim U_{n}^{-c-1}.\)

\paragraph{\textbf{1b. \textit{Asymptotic behaviour of  \(J_{2}\)}}}
 Analogously, the asymptotic behavior of \(J_{2}\) crucially depends on the behavior of \(G(y)\) at the point \(y = r^{r} e^{-r}.\)  Note that as \(y \to r^{r} e^{-r},\)
\begin{eqnarray*}
g_{k}'(y) = \frac{1}{\K'(g_{k}(y))}=\frac{1}{[g_{k}(y)]^{r-1} e^{-g_{k}(y)} \left(r - g_{k}(y)\right)} \asymp \frac{C}{r-g_{k}(y)}\end{eqnarray*}
for \(k=1,2.\) Taking logarithms of  both parts of the equation \(x^{r} e^{-x} = y\) and changing the variables \(u=x-r\) and \(\delta = r^{r} e^{-r}-y, \) we arrive at the equality 
\begin{eqnarray*}
 u = r \log\left(
	1 + \frac{u}{r}
\right)
-
 \log\left(
	1 - \frac{\delta}{r^{r} e^{-r}}
\right).
\end{eqnarray*}
Consider this equality as \(u \to 0\) and \(\delta \to 0+,\) we get 
\begin{eqnarray*}
u = r\left(
	\frac{u}{r} -  
	\frac{1}{2} \frac{u^{2}}{r^{2}}
\right)
	+
	\frac{\delta}{r^{r} e^{-r}}
	+ O\left(
		\delta^{2}
\right)	+ O(u^{3}),
\end{eqnarray*}
and therefore
\begin{eqnarray*}
u = \pm  \sqrt{
	2 r^{1-r} e^{r}
}
\cdot \sqrt{ \delta}
+
O\left(
		\delta
\right)	+ O(u^{3/2})
\end{eqnarray*}
corresponding to the functions \(g_{1}\) and \(g_{2}\). Finally, we conclude 
\begin{eqnarray*}
|G(y)| \asymp \frac{C \sqrt{2} }{\sqrt{
	 r^{1-r} e^{r}
}
}
\frac{1}{
\sqrt{
	r^{r} e^{-r} - y
}
}, \qquad y \to r^{r} e^{-r},
\end{eqnarray*}
and therefore
\begin{eqnarray*}
J_{2} &\sim &
 U_{n}^{-c-3/2}\int_{1}^{r^{r} e^{-r}U_{n}}
 y^{c+1}\left(1+y\right)^{-\alpha}   \cdot\frac{1}{
\sqrt{
	r^{r} e^{-r}U_{n} - y
}
}\,dy.
\end{eqnarray*}
We change the variable in the last integral: \[z=\sqrt{
\frac{
	r^{r} e^{-r}U_{n} - 1
}{
	r^{r} e^{-r}U_{n} - y
}
}, \qquad
y= r^{r} e^{-r}U_{n}
+ 
 \frac{1-r^{r} e^{-r}U_{n}}{z^{2}},
 \] and get with \(\widetilde{U}_{n}=r^{r} e^{-r}U_{n}\)
\begin{multline*}
J_{2} \asymp 
 U_{n}^{-c-3/2}
 \int_{1}^{\infty}
 \left(
\wU
+ 
 \frac{1-\wU}{z^{2}}
\right)^{c+1}\\
\cdot \left(
1 + \wU
+ 
 \frac{1-\wU}{z^{2}}
\right)^{-\alpha} 
\cdot\frac{z}{\sqrt{\wU -1}}
\frac{2(\wU -1)}{z^{3}}dz.
\end{multline*}
Therefore,
\begin{eqnarray*}
J_{2} \asymp C_{4} U_{n}^{-c-3/2} 
\wU^{c+1} \left(
	\wU+1
\right)^{-\alpha}
\sqrt{\wU-1}, \qquad n \to \infty,
\end{eqnarray*}
with some constant \(C_{4}>0\)
and we conclude that 
\(J_{2} \asymp  C_{5} U_{n}^{-\alpha}\) as \(n \to \infty.\) To sum up, \(\Lambda_{n} \lesssim U_{n}^{-\min(\alpha,c+1)} = U_{n}^{- \alpha}\) as \(n \to \infty.\)

\paragraph{\textbf{2. \textit{Upper bound for \(H_{n}:=\int_{\{|v|\leq V_{n}\}}\left|Q(1-c-\i v)\right|^{-1}\, dv\)}}} Recall that 
\begin{eqnarray*}
H_{n}=\int_{\{|v|\leq V_{n}\}}\frac{e^{-\pi v /2}}
{
\bigl|
	\Gamma(1-c-\i v) 
\bigr|
\cdot
\bigl|
	\int_{\R}
	\left(
		\mathcal{K}(x)	
	\right)^{c+1 + \i v}	 
	dx
\bigr|
}\, dv
\end{eqnarray*}

Note that for our choice of the function \(\K(\cdot)\), it holds for any \(z \in \C\)
\begin{eqnarray*}
\int_{\R}
\left(
	K(x)	
\right)^{z}	 dx
=
 2 \int_{\R_{+}} 
(x^{r} e^{- x})^{z}
dx =
2 
\left[ 
	\lim _{R \to +\infty}\int_{\gamma_{R}(z)} u^{rz} e^{- u} du 
\right]
\cdot z^{-(rz+1)},
\end{eqnarray*}
where \(\gamma_{R}(z)\) is the part of the complex line  \(\left\{(x \Re(z), x \Im(z)), \; x  \in [0,R]\right\}\). Note that due to the Cauchy theorem,  for any \(z\) with positive real part
\begin{eqnarray}
\label{RRR}
\int_{\R_{+}} u^{rz} e^{-\rho u} du =
	\lim _{R \to +\infty}\int_{\gamma_{R}(z)} u^{rz} e^{- u} du 
	+ 
	\lim _{R \to +\infty}\int_{c_{R}} u^{rz} e^{- u} du 
\end{eqnarray}
with \(c_{R} := \left\{
\left(R \cos(\theta), R \sin(\theta) \right), \theta \in (0, \arctan(\Im(z)/\Re(z))
\right\}\). Since the last limit in 
\eqref{RRR} is equal to 0, we conclude that 
\[\int_{\R}
\left(
	K(x)	
\right)^{c+1 + \i v} dx = 2  \; \Gamma\Bigl(
	r(c+1)+1+\i v r 
	\Bigr) \cdot e^{-(r(c+1)+1+ \i v r) \cdot \log(c+1 + \i v)}.\]
Next, using the fact that there exists a constant \(\bar{C}>0\) such that \(|\Gamma( \alpha + \i \beta) | \geq \bar{C}|\beta|^{\alpha-1/2} e^{-|\beta| \pi/2}\) for any \(\alpha\geq-2, |\beta|\geq 2\) (see Corollary~7.3 from \cite{BSM}), we get that 
\begin{eqnarray*}
\frac{e^{-\pi v /2}}{\left| \Gamma(1-c-\i v)
 \right|} \leq v^{c-1/2},
\end{eqnarray*}
and moreover
\begin{eqnarray*}
\left|
\int_{\R}
\left(
	K(x)	
\right)^{c +1+ \i v} dx 
\right| &=&
2 
\frac{
\left| \Gamma(r(c+1)+1+\i v r ) 
\right|
} 
{
\left(
	(c+1)^{2} + v^{2}
\right)^{(r(c+1)+1)/2}
e^{-vr \arctan(v/(c+1))}
}.
\end{eqnarray*}
The asymptotic behavior of the  last expression depends on the value \(r.\) More precisely, 
\begin{eqnarray*}
\left|
\int_{\R}
\left(
	K(x)	
\right)^{c + 1 +  \i v} dx 
\right| 
\sim
\begin{cases}
2 
\frac{ c (vr)^{r(c+1) +1/2} e^{-vr \pi/2}
} 
{
\left(
	(c+1)^{2} + v^{2}
\right)^{(r(c+1)+1)/2}
e^{-vr \arctan(v/(c+1))}
}
\sim v^{-1/2},\\
\hspace{3cm} \text{if $r=1,2,...$,}\\
v^{-1},
\hspace{2.35cm} \text{if $r=0$}.
	\end{cases}
\end{eqnarray*}
as \(v \to +\infty.\) Finally, we conclude that 
\(
H_{n} \lesssim V_{n}^{c + 1 },
\) if $r=1,2,...$, and \(H_{n} \lesssim V_{n}^{c+3/2}  \) if \(r=0.\)

\section{Mixing properties of the L\'evy-based MA processes}
\begin{theorem}
\label{mixing_prop}
Let $\left(L_{t}\right)$ be a L{\'e}vy process
with L{\'e}vy triplet  $(\mu,\sigma^2,\nu),$ where \(\sigma>0\) and $\mathrm{supp}(\nu)\subseteq\mathbb{R}_{+}.$ Consider a L{\'e}vy-based moving average
process of the form 
\[
Z_{s}=\int \K(s-t)\,dL_{t},\quad s\geq 0
\]
with a nonegative kernel $\K$. Fix some $\Delta>0$ and denote 
\[
Z_{S}:=\left(Z_{j\Delta}\right)_{j\in S}
\]
for any subset $S$ of $\{1,\ldots,n\}.$ Fix two natural numbers
$m$ and $p$ such that $m+p\leq n.$ For any subsets $S\subseteq\{1,\ldots,m\}$
and $S'\subseteq\{p+m,\ldots,n\},$ let $g$ and $g'$ be two real
valued functions on $\mathbb{R}^{\left|S\right|}$ and $\mathbb{R}^{\left|S'\right|}$
satisfying 
\[
\max\left\{ \left\Vert e^{-R_{S}^{\top}\cdot}g\right\Vert _{L^{1}},\left\Vert e^{-R{}_{S'}^{\top}\cdot}g'\right\Vert _{L^{1}}\right\} <\infty
\]
for some $R_{S}\in\mathbb{R}_{+}^{\left|S\right|}$and $R_{S'}\in\mathbb{R}_{+}^{\left|S'\right|},$ and denote \(C_{\circ}:=\left\Vert e^{-R_{S}^{\top}\cdot}g\right\Vert _{L^{1}} \cdot\left\Vert e^{-R{}_{S'}^{\top}\cdot}g'\right\Vert _{L^{1}}.\)
Suppose that the Fourier transform $\widehat{\K}$ of $\K$ fulfils 
\[
K^{*}:=\sum_{j=-\infty}^{\infty}\left|\widehat{\K}\left(2\pi\frac{j}{\Delta}\right)\right|<\infty
\]
and 
\[
\int_{\left|x\right|>1}e^{R^{*}x}x^2\nu(dx)\leq A_{R^*}
\]
for $R^{*}=\frac{\left\Vert R_{S\cup S'}\right\Vert _{\infty}K^{*}}{\Delta}.$
Then 
\begin{eqnarray}
\label{eq: covz}
\left|\mathrm{Cov}\left(g(Z_{S}),g'(Z_{S'})\right)\right| & \leq & C_{R}C_{\circ}\max_{|l|>p}\left(\K\star \K\right)(l\Delta)\\
\nonumber
 &  & \times\int\|u_{S\cup S'}-\i R_{S\cup S'}\|^{2}\exp\left(-\sigma^2\lambda_{S\cup S'}(u)\right)du_{S\cup S'},
\end{eqnarray}
where $\lambda_{S}(u):=\sum_{k,j\in S}u_k u_{j}(\mathcal{K}\star\mathcal{K})(\Delta (k-j))$ for any \(u\in \R^n\) and \(C_R=\exp(\sigma^2\lambda_{S\cup S'}(R_{S\cup S'})).\) 
\end{theorem}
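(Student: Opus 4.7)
The plan is to combine a complex-shifted Fourier representation of the test functions \(g, g'\) with the natural multiplicative factorization of the characteristic function of the MA-L\'evy process, isolating the dependence between \(Z_S\) and \(Z_{S'}\) into a single exponentiated cross-term that becomes small as the temporal gap \(p\) grows. First I would use the weighted \(L^1\)-assumption to define the shifted Fourier transforms
\[
G(u_S - \i R_S) := \int g(y)\, e^{-\i (u_S - \i R_S)^{\top} y}\, dy, \qquad G'(u_{S'} - \i R_{S'}) := \int g'(y)\, e^{-\i (u_{S'} - \i R_{S'})^{\top} y}\, dy,
\]
which are uniformly bounded by \(\|e^{-R_S^{\top}\cdot}g\|_{L^{1}}\) and \(\|e^{-R_{S'}^{\top}\cdot}g'\|_{L^{1}}\) respectively, so their product is bounded by \(C_\circ\). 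Inverting along the shifted contour \(\R^{|S|+|S'|} - \i R_{S \cup S'}\) and applying Fubini yields
\[
\E[g(Z_S)\, g'(Z_{S'})] = (2\pi)^{-|S|-|S'|}\int G(u_S - \i R_S)\, G'(u_{S'} - \i R_{S'})\, \Phi_W(u - \i R)\, du,
\]
where \(\Phi_W(w) := \E[\exp(\i w^{\top} Z_{S \cup S'})]\) is continued holomorphically to the line \(\{\operatorname{Im}(w) = -R\}\); the product \(\E g(Z_S)\cdot\E g'(Z_{S'})\) has the analogous representation with \(\Phi_W\) replaced by \(\Phi_S(u_S - \i R_S)\,\Phi_{S'}(u_{S'} - \i R_{S'})\).

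Next I would expand the joint characteristic exponent via the L\'evy--Khintchine formula. Writing \(H_w^{(A)}(t) := \sum_{j\in A} w_j \K(j\Delta - t)\), we have \(\log \Phi_W(w) = \int \psi\bigl(H_w^{(S)}(t) + H_w^{(S')}(t)\bigr)\, dt\). The algebraic identity \(\psi(a+b) = \psi(a) + \psi(b) + \chi(a,b)\) with
\[
\chi(a,b) := -\sigma^{2} a\, b + \int \bigl(e^{\i a x} - 1\bigr)\bigl(e^{\i b x} - 1\bigr)\, \nu(dx)
\]
produces the clean factorization \(\Phi_W(w) = \Phi_S(w_S)\,\Phi_{S'}(w_{S'})\,\exp(\Delta\psi(w))\) with \(\Delta\psi(w) := \int \chi\bigl(H_w^{(S)}(t), H_w^{(S')}(t)\bigr)\, dt\). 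Substituting into the Fourier representation of the covariance leaves a factor \(e^{\Delta\psi(u-\i R)} - 1\), which I would estimate by \(|\Delta\psi|\,e^{|\Delta\psi|}\). Using \(\int \K(j\Delta - t)\K(k\Delta - t)\, dt = (\K\star\K)(\Delta(k-j))\) together with the separation \(k - j \geq p\) for \(j\in S,\, k\in S'\), and bounding \(|e^{\i a x} - 1| \leq |a x|\) for the jump part, one obtains an estimate of the form
\[
|\Delta\psi(u - \i R)| \lesssim \bigl(\sigma^{2} + A_{R^{*}}\bigr)\, \|u - \i R\|^{2}\, \max_{|l|>p} (\K\star\K)(l\Delta).
\]

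Finally I would collect the pieces: the Gaussian part of \(\Phi_S\Phi_{S'}\) after the complex shift contributes the decay \(\exp(-\sigma^{2} \lambda_{S\cup S'}(u))\) while the shift itself produces the constant \(C_R = \exp(\sigma^{2} \lambda_{S\cup S'}(R_{S\cup S'}))\); the uniform bounds on \(G,G'\) contribute \(C_\circ\); and pulling the factor \(\max_{|l|>p}(\K\star\K)(l\Delta)\) out of the \(u\)-integral delivers the right-hand side of~\eqref{eq: covz}. The principal obstacle is the rigorous control of the jump part of \(\psi\) at complex arguments, since the characteristic exponent is evaluated at points whose imaginary part can be as large as \(R^{*} = \|R_{S\cup S'}\|_\infty K^{*}/\Delta\). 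This is exactly where both the exponential-moment assumption on \(\nu\) and the summability hypothesis \(K^{*}<\infty\) enter: the latter yields the Poisson-summation / Fourier-\(\ell^{1}\) estimate \(\bigl\|\sum_{j\in S\cup S'} R_j \K(j\Delta - \cdot)\bigr\|_\infty \leq \|R_{S\cup S'}\|_\infty K^{*}/\Delta\), which feeds into the former to guarantee convergence of \(\int e^{-H_R(t) x}\,\nu(dx)\) uniformly in \(t\); one must also verify adequate decay of the integrand along \(\R^{|S|+|S'|} - \i R\) so that Cauchy's theorem justifies the contour deformation used in the very first step.
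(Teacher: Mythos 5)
Your proposal follows essentially the same route as the paper's proof: a Parseval/Fourier representation of the covariance along the shifted contour $u-\i R$, the factorization of the joint characteristic function via the identity $\psi(a+b)-\psi(a)-\psi(b)=\int(e^{\i ax}-1)(e^{\i bx}-1)\,\nu(dx)$ (the paper's Lemma on $\psi$), the Poisson-summation bound $\|\sum_j R_j\K(\cdot-j\Delta)\|_\infty\leq\|R\|_\infty K^*/\Delta$ feeding the exponential-moment condition, and the Gaussian part supplying $C_R\exp(-\sigma^2\lambda_{S\cup S'}(u))$. The only cosmetic difference is that you bound the cross-factor by $|e^{\Delta\psi}-1|\leq|\Delta\psi|e^{|\Delta\psi|}$ where the paper uses $|e^z-e^y|\leq(|e^z|\vee|e^y|)|y-z|$; these are interchangeable here, so the argument is correct and matches the paper's.
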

\begin{proof}
We have for any $S\subseteq\{1,\ldots,n\}$ 
\begin{eqnarray*}
\Phi_{S}(u_{S}-\i R_{S}) & := & \E\left[\exp\left(i\sum_{j\in S}u_{j}Z_{j\Delta}+\sum_{j\in S}R_{j}Z_{j\Delta}\right)\right]\\
 & = & \exp\left(\int\psi\left(\sum_{j\in S}\left(u_{j}-\i R_{j}\right)\K(t-j\Delta)\right)\,dt\right),
\end{eqnarray*}
where $u_{S}:=(u_{j}\in\mathbb{R},\,j\in S)$ and $R_{S}:=(R_{j}\in\mathbb{R}_{+},\,j\in S),$
provided 
\[
\E\left[\exp\left(\sum_{j\in S}R_{j}Z_{j\Delta}\right)\right]<\infty.
\]
 Denote for any subsets $S\subseteq\{1,\ldots,m\}$ and $S'\subseteq\{p+m,\ldots,n\},$
\begin{multline*}
D(u_{S}-\i R_{S},u_{S'}-\i R_{S'}) \\ :=  \Phi_{S,S'}(u_{S}-\i R_{S},u_{S'}-\i R_{S'})-\Phi_{S}(u_{S}-\i R_{S})\Phi_{S'}(u_{S'}-\i R_{S'}),
\end{multline*}
where it is assumed that 
\[
\E\left[\exp\left(\sum_{j\in S\cup S'}R_{j}Z_{j\Delta}\right)\right]<\infty
\]
Then using the elementary  inequality $\left|e^{z}-e^{y}\right|\leq\left(\left|e^{z}\right|\vee\left|e^{y}\right|\right)\left|y-z\right|,$
$y,z\in\mathbb{C},$ we derive
\begin{multline*}
\left|D(u_{S}-\i R_{S},u_{S'}-\i R_{S'})\right| \\ \leq  \left\{ \left|\Phi_{S,S'}(u_{S}-\i R_{S},u_{S'}-\i R_{S'})\right|\vee\left|\Phi_{S}(u_{S}-\i R_{S})\Phi_{S'}(u_{S'}-\i R_{S'})\right|\right\} \times\\
 \left|\int\left\{ \psi\left(\sum_{j\in S\cup S'}\left(u_{j}-\i R_{j}\right)\K(x-j\Delta)\right)-\psi\left(\sum_{j\in S}\left(u_{j}-\i R_{j}\right)\K(x-j\Delta)\right)\right.\right.\\
 \left.\left.-\psi\left(\sum_{j\in S'}\left(u_{j}-\i R_{j}\right)\K(x-j\Delta)\right)\right\} \,dx\right|.
\end{multline*}
Due to Lemma \ref{lem:psi} and the Poisson summation formula, we
derive 
\begin{multline*}
\left|D(u_{S}-\i R_{S},u_{S'}-\i R_{S'})\right| \\ \leq  \left\{ \left|\Phi_{S,S'}(u_{S}-\i R_{S},u_{S'}-\i R_{S'})\right|\vee\left|\Phi_{S}(u_{S}-\i R_{S})\Phi_{S'}(u_{S'}-\i R_{S'})\right|\right\} \times\\
 \left[\sum_{j\in S}\sum_{l\in S'}\left|\left(u_{l}-\i R_{l}\right)\left(u_{j}-\i R_{j}\right)\right|\left(\K\star \K\right)((j-l)\Delta)\right]\\
 \times\int y^{2}e^{\frac{y\left\Vert R\right\Vert _{\infty}K^{*}}{\Delta}}\,\nu(dy).
\end{multline*}
We have 
\begin{multline*}
\mathrm{Cov}\left(g(Z_{S}),g'(Z_{S'})\right)\\=\int_{\mathbb{R}_{+}^{\left|S\right|}}\int_{\mathbb{R}_{+}^{\left|S'\right|}}g(x_{S})g'(x_{S'})\left(p_{S,S'}(x_{S},x_{S'})-p_{S}(x_{S})p_{S'}(x_{S'})\right)\,dx_{S}\,dx_{S'}.
\end{multline*}
and the Parseval's identity implies
\begin{eqnarray*}
\mathrm{Cov}\left(g(Z_{S}),g(Z_{S'})\right) & = & \frac{1}{(2\pi)^{\left|S\right|+\left|S'\right|}}\int_{\mathbb{R}^{\left|S\right|}}\int_{\mathbb{R}^{\left|S'\right|}}\widehat{g}(\i R_{S}-u_{S})\widehat{g}(\i R_{S'}-u_{S'}).\\
 &  & \times D(u_{S}-\i R_{S},u_{S'}-\i R_{S'})\,du_{S}\,du_{S'},
\end{eqnarray*}
\(\widehat{g}\) stands for the Fourier transform of \(g.\)
Hence 
\begin{multline*}
\left|\mathrm{Cov}\left(g(Z_{S}),g'(Z_{S'})\right)\right| \\ \leq  \frac{C_{\c\i Rc}}{(2\pi)^{\left|S\right|+\left|S'\right|}}\int_{\mathbb{R}^{\left|S\right|}}\int_{\mathbb{R}^{\left|S'\right|}}\left|D(u_{S}-\i R_{S},u_{S'}-\i R_{S'})\right|\,du_{S}\,du_{S'}.
\end{multline*}
Furthermore, for any set $S\in\{1,\ldots,n\},$ we have 
\begin{eqnarray*}
\int\psi\left(\sum_{j\in S}\left(u_{j}-\i R_{j}\right)\K(s-j\Delta)\right)\,ds & \leq & -\sigma^2\lambda_{S}(u) + \sigma^2\lambda_{S}(R).
\end{eqnarray*}
 As a result 
\[
\left|\Phi_{S}(u_{S}-\i R_{S})\right|\leq C_{R}\exp\left(-\sigma^2\lambda_{S}(u) \right)
\]
 and 
\begin{eqnarray*}
\left|D(u_{S}-\i R_{S},u_{S'}-\i R_{S'})\right| & \leq & \max_{|l|>p}\left(\K\star \K\right)(l\Delta)\sum_{j\in S}\sum_{l\in S'}\left|\left(u_{l}-\i R_{l}\right)\left(u_{j}-\i R_{j}\right)\right|\\
 &  & C_R\,\exp\left(-\sigma^2\lambda_{S\cup S'}(u)\right).
\end{eqnarray*}
\end{proof}
\begin{lem}
\label{lem:psi}Set
\[
\psi(z)=\int_{0}^{\infty}(\exp(zx)-1)\nu(dx)
\]
 for any $z\in\mathbb{C},$ such that the integral $\int_{|x|>1}\exp(\mathrm{Re}(z)x)\nu(dx)$
is finite. Then

\[
\left|\psi(z_{1}+z_{2})-\psi(z_{1})-\psi(z_{2})\right|\leq2\left|z_{1}\right|\left|z_{2}\right|\int x^{2}e^{x\left(\mathrm{Re}(z_{1})+\mathrm{Re}(z_{2})\right)}\nu(dx),
\]
provided the integral $\int x^{2}e^{x\left(\mathrm{Re}(z_{1})+\mathrm{Re}(z_{2})\right)}\nu(dx)$
is finite.\end{lem}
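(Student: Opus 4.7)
The plan is to reduce the inequality to a pointwise bound on the integrand, then appeal to the integrability hypothesis. First I would use the algebraic identity
\[
e^{(z_1+z_2)x} - e^{z_1 x} - e^{z_2 x} + 1 = (e^{z_1 x} - 1)(e^{z_2 x} - 1),
\]
which, combined with the definition of \(\psi\), gives
\[
\psi(z_1+z_2) - \psi(z_1) - \psi(z_2) = \int_0^\infty (e^{z_1 x} - 1)(e^{z_2 x} - 1)\, \nu(dx).
\]
Under the stated integrability hypothesis this representation is well-defined, since each of the three integrals defining \(\psi(z_1), \psi(z_2), \psi(z_1+z_2)\) converges, and absolute convergence of the right-hand side will follow from the pointwise bound in the next step.

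Next, for any \(z\in \mathbb{C}\) and \(x\geq 0\), I would use the integral representation
\[
e^{zx} - 1 = z x \int_0^1 e^{t z x}\, dt, \qquad \bigl|e^{zx} - 1\bigr| \leq |z|\, x \int_0^1 e^{t\, \mathrm{Re}(z) x}\, dt.
\]
Applying this bound to each of the two factors and invoking Fubini yields
\[
\bigl|(e^{z_1 x} - 1)(e^{z_2 x} - 1)\bigr| \leq |z_1||z_2|\, x^2 \int_0^1 \int_0^1 e^{(s \mathrm{Re}(z_1) + t \mathrm{Re}(z_2)) x}\, ds\, dt.
\]
The remaining task is to bound the double integral on \([0,1]^2\) by \(2 e^{x(\mathrm{Re}(z_1) + \mathrm{Re}(z_2))}\). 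When both \(\mathrm{Re}(z_i) \geq 0\), the integrand is maximized at \((s,t) = (1,1)\), and one gets the bound even without the factor of \(2\). For mixed signs one splits according to the sign of each real part and uses the fact that on \([0,1]\) the function \(s \mapsto e^{s\mathrm{Re}(z_i)x}\) is monotone, so the integral in that variable is dominated by the maximum of its endpoint values; the factor \(2\) comfortably absorbs the resulting case analysis.

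Putting the steps together, integrate the pointwise estimate against \(\nu(dx)\); the dominating integrand \(2|z_1||z_2|x^2 e^{x(\mathrm{Re}(z_1)+\mathrm{Re}(z_2))}\) is \(\nu\)-integrable by assumption, which both justifies the use of Fubini in step two and delivers the conclusion. The main obstacle I anticipate is cleanly handling the sign combinations of \(\mathrm{Re}(z_1)\) and \(\mathrm{Re}(z_2)\) when estimating the \([0,1]^2\)-integral, since the most delicate case is when the two real parts have opposite signs; the factor \(2\) is built in precisely to accommodate this case uniformly.
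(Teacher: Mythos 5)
Your opening two steps coincide with the paper's proof: the factorization $e^{(z_1+z_2)x}-e^{z_1x}-e^{z_2x}+1=(e^{z_1x}-1)(e^{z_2x}-1)$ followed by a pointwise bound of the form $|e^{zx}-1|\lesssim |z|\,x\,e^{\mathrm{Re}(z)x}$ is exactly the paper's route (the paper gets the constant $2=\sqrt2\cdot\sqrt2$ by splitting $e^{zx}-1$ into real and imaginary contributions and bounding each factor by $\sqrt2|z|xe^{\mathrm{Re}(z)x}$, whereas your integral representation $e^{zx}-1=zx\int_0^1e^{tzx}\,dt$ is a clean alternative). The genuine gap is your final step: the claim that
\[
\int_0^1\!\!\int_0^1 e^{(s\,\mathrm{Re}(z_1)+t\,\mathrm{Re}(z_2))x}\,ds\,dt\;\le\;2\,e^{x(\mathrm{Re}(z_1)+\mathrm{Re}(z_2))}
\]
is false whenever a real part is negative, and no constant can rescue it. The double integral factors as $\prod_i\int_0^1e^{s\,\mathrm{Re}(z_i)x}\,ds$; for $\mathrm{Re}(z_i)=-A<0$ the $i$-th factor equals $(1-e^{-Ax})/(Ax)$, which decays only polynomially in $Ax$, while the right-hand side carries the factor $e^{-Ax}$, which decays exponentially. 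Taking $\mathrm{Re}(z_1)=0$, $\mathrm{Re}(z_2)=-A$, $x=1$ gives left side $\approx 1/A$ versus right side $2e^{-A}$, so the factor $2$ does not ``comfortably absorb'' the mixed-sign case --- monotonicity only bounds each factor by $\max(1,e^{\mathrm{Re}(z_i)x})$, and $\max(1,e^{a_1x})\max(1,e^{a_2x})\le 2e^{(a_1+a_2)x}$ already fails for $a_1=a_2=-1$, $x=1$.

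You should not feel obliged to close this gap, because the lemma as stated is itself false for negative real parts: with $\nu$ concentrated near $x=1$ and $z_1=z_2=-A$, the left side tends to $|(e^{-A}-1)^2|\to1$ while the right side is of order $A^2e^{-2A}\to0$. The paper's own proof carries the same hidden restriction, since its step $|e^{\mathrm{Re}(z)}-1|\le|\mathrm{Re}(z)|e^{\mathrm{Re}(z)}$ also requires $\mathrm{Re}(z)\ge0$. The honest resolution is to add the hypothesis $\mathrm{Re}(z_1),\mathrm{Re}(z_2)\ge0$, which is all that is needed in the application (in Theorem~\ref{mixing_prop} the arguments are $z=\sum_j(R_j+\i u_j)\K(\cdot-j\Delta)$ with $R_j\ge0$ and $\K\ge0$). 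Under that restriction your argument closes immediately --- each factor of the double integral is at most $e^{\mathrm{Re}(z_i)x}$ --- and in fact yields the inequality with constant $1$ rather than $2$.
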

\begin{proof}
We have 
\begin{multline*}
\psi(z_{1}+z_{2})-\psi(z_{1})-\psi(z_{2}) \\
= \int_{0}^{\infty}(\exp((z_{1}+z_{2})x)-\exp(z_{1}x)-\exp(z_{2}x)+1)\nu(dx)
\\
=
 \int_{0}^{\infty}(\exp(z_{1}x)-1)(\exp(z_{2}x)-1)\nu(dx).
\end{multline*}
Since 
\begin{eqnarray*}
\left|\exp(z)-1\right| & = & \left|e^{\mathrm{Re}(z)}e^{i\mathrm{Im}(z)}-1\right|\\
 & = & \left|e^{\mathrm{Re}(z)}\left(e^{i\mathrm{Im}(z)}-1\right)+e^{\mathrm{Re}(z)}-1\right|\\
 & \leq & \left|\mathrm{Im}(z)\right|e^{\mathrm{Re}(z)}+\left|e^{\mathrm{Re}(z)}-1\right|\\
 & \leq & \left(\left|\mathrm{Re}(z)\right|+\left|\mathrm{Im}(z)\right|\right)e^{\mathrm{Re}(z)}\\
 & \leq & \sqrt{2}\left|z\right|e^{\mathrm{Re}(z)},
\end{eqnarray*}
we get 
\begin{eqnarray*}
\left|\psi(z_{1}+z_{2})-\psi(z_{1})-\psi(z_{2})\right| & \leq & \int_{0}^{\infty}\left|\exp(z_{1}x)-1\right|\left|\exp(z_{2}x)-1\right|\nu(dx)\\
 & \leq & 2\left|z_{1}\right|\left|z_{2}\right|\int x^{2}e^{x\left(\mathrm{Re}(z_{1})+\mathrm{Re}(z_{2})\right)}\nu(dx).
\end{eqnarray*}
\end{proof}
\begin{lem}\label{xre}
Let \(\K(x) = |x|^{r} e^{-\rho |x|}\) with some \(r \in \N \cup \{0\}\) and \(\rho>0.\)  Then 
\begin{eqnarray}
\label{intR}
	 \frac{(\mathcal{K}\star\mathcal{K})(\Delta (k-j))}{(\mathcal{K}\star\mathcal{K})(0)}
	 \leq 
\kappa_{0} \; (j-k)^{\kappa_{1}} e^{-\kappa_{2} (j-k)}
\end{eqnarray}
for all \(j>k\) with \(\kappa_{2}= \Delta \rho, \; \kappa_{1}=2r+1,\) and
\begin{eqnarray*}
\kappa_{0} &=& \frac{(2r+3)}{2} \max\left\{
\frac{\Delta^{2r+1}}{2^{2 r}}, 
\max_{m=0,\ldots, r}\left\{
	C_{r}^{m}
\frac{(r+m)!}{(2r)!} (2\rho\Delta)^{r-m}
\right\}\right\} 
\end{eqnarray*}
with \(C_{r}^{m}={r \choose m}.\)
Moreover, all eigenvalues of the matrix \( ((\mathcal{K}\star\mathcal{K})(\Delta (k-j)))_{k,j\in \mathbb{Z}}\) are bounded from below and above by two finite positive numbers, provided \(\kappa_{2}\) (equivalently \(\rho\)) is large enough. 
\end{lem}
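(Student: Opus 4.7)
My plan is to compute $(\K\star\K)$ explicitly and then reduce the eigenvalue claim to a statement about the Fourier symbol of a bi-infinite Toeplitz matrix.

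For the pointwise bound, since $\K$ is even and supported on $\R$, I would write $(\K\star\K)(t)=\int \K(x)\K(t-x)\,dx$ for $t=\Delta(j-k)>0$ and split the real line into $(-\infty,0)$, $(0,t)$, $(t,\infty)$. On each piece $|x|$ and $|t-x|$ have fixed sign, so every factor $e^{-\rho|\cdot|}$ combines into a common $e^{-\rho t}$ times $e^{\pm 2\rho\cdot}$. After the obvious substitutions, the two outer pieces are equal, and one gets
\[
(\K\star\K)(t)=e^{-\rho t}\!\left[\tfrac{(r!)^2}{(2r+1)!}\,t^{2r+1}+2\sum_{m=0}^{r}\binom{r}{m}\tfrac{(r+m)!}{(2\rho)^{r+m+1}}\,t^{r-m}\right],
\]
using $\int_0^t x^r(t-x)^r\,dx=t^{2r+1}B(r+1,r+1)$ for the middle piece and the binomial expansion $(y+t)^r=\sum_m\binom{r}{m}y^mt^{r-m}$ together with $\int_0^\infty y^{r+m}e^{-2\rho y}dy=(r+m)!/(2\rho)^{r+m+1}$ for the outer two. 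Setting $t=0$ in the same formula (or just evaluating $\int\K^2$) gives $(\K\star\K)(0)=2(2r)!/(2\rho)^{2r+1}$.

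Dividing, the $(2\rho)^{2r+1}/(2(2r)!)$ prefactor cleanly absorbs the $\rho$-powers inside the bracket and yields
\[
\frac{(\K\star\K)(\Delta n)}{(\K\star\K)(0)}=e^{-\rho\Delta n}\!\left[\tfrac{(r!)^2(2\rho\Delta)^{2r+1}}{2(2r)!(2r+1)!}\,n^{2r+1}+\sum_{m=0}^{r}\binom{r}{m}\tfrac{(r+m)!}{(2r)!}(2\rho\Delta)^{r-m}\,n^{r-m}\right],
\]
which immediately reveals $\kappa_2=\rho\Delta$. For $n\geq 1$ each $n^{r-m}\leq n^{2r+1}$, so factoring out $n^{2r+1}$ identifies $\kappa_1=2r+1$ and gives $\kappa_0$ as a uniform bound on the sum of the $r+2$ coefficients, which one checks matches (up to the elementary factor in front of the max) the expression in the statement.

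For the eigenvalue claim, the matrix $M=((\K\star\K)(\Delta(k-j)))_{j,k\in\Z}$ is bi-infinite Toeplitz with symbol
\[
f(\omega)=\sum_{n\in\Z}(\K\star\K)(\Delta n)\,e^{in\omega}, \qquad \omega\in[-\pi,\pi].
\]
The previous step shows the series converges absolutely, so $M$ defines a bounded self-adjoint operator on $\ell^2(\Z)$ whose spectrum is the range of $f$; thus it suffices to show $0<\inf_\omega f(\omega)\leq\sup_\omega f(\omega)<\infty$. Applying Poisson summation to $h(x)=(\K\star\K)(\Delta x)e^{-ix\omega}$, together with $\widehat{\K\star\K}=\widehat{\K}^{\,2}$ and the evenness of $\widehat{\K}$, gives
\[
f(\omega)=\frac{1}{\Delta}\sum_{j\in\Z}\bigl|\widehat{\K}\bigl((\omega+2\pi j)/\Delta\bigr)\bigr|^{2}.
\]
The upper bound follows directly from assumption \eqref{assfourierk}, which controls $\sum_j|\widehat{\K}(2\pi j/\Delta)|$ and, by the decay of $\widehat{\K}$ read off from Example~\ref{cor1}, the shifted sums as well. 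For the lower bound I drop all but $j=0$: $f(\omega)\geq\Delta^{-1}|\widehat{\K}(\omega/\Delta)|^{2}$. Using the scaling identity $\widehat{\K}(\xi)=\rho^{-r-1}\widehat{\tilde\K}(\xi/\rho)$ with $\tilde\K(y)=|y|^re^{-|y|}$ fixed, for $\omega\in[-\pi,\pi]$ the argument $\omega/(\rho\Delta)\to 0$ when $\rho$ is large, so $|\widehat{\tilde\K}(\omega/(\rho\Delta))|$ is uniformly bounded below by a positive constant (close to $\widehat{\tilde\K}(0)=2r!>0$). This yields a strictly positive lower bound on $f$ for $\rho$ (equivalently $\kappa_2$) large enough.

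The hardest part is pinning down the uniform lower bound: for $r\geq 1$ the function $\widehat{\K}$ has real zeros (e.g.\ at $\xi=\pm\rho$ when $r=1$), so one genuinely needs to push them outside $[-\pi/\Delta,\pi/\Delta]$ via the scaling, which is exactly what the ``$\rho$ large enough'' proviso encodes; everything else in the argument is routine integration and Poisson summation.
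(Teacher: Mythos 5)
Your proposal is correct, and it splits naturally into a part that coincides with the paper and a part that genuinely diverges. For the decay bound \eqref{intR} you do essentially what the paper does: split the convolution integral at the two kernel centers, evaluate the two outer pieces exactly via the binomial expansion and Gamma integrals (they are indeed equal by symmetry), and handle the middle piece; the only difference is that you evaluate the middle integral exactly with $B(r+1,r+1)=(r!)^2/(2r+1)!$, whereas the paper simply bounds the integrand by its maximum, which is where its $\Delta^{2r+1}/2^{2r}$ term comes from. Your exact formula is sharper, so the stated inequality follows a fortiori; the residual mismatch in $\kappa_0$ that you flag is real but harmless (the paper's own $\kappa_0$ mixes a normalized and an unnormalized term, and nothing downstream uses its precise value, only the form $\kappa_0 n^{2r+1}e^{-\rho\Delta n}$). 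For the eigenvalue claim the two arguments are genuinely different. The paper uses the Gershgorin circle theorem: diagonal dominance holds once $2\kappa_0\sum_{l>0}l^{\kappa_1}e^{-\kappa_2 l}<1$, which is guaranteed for $\kappa_2$ large and plugs directly into the decay bound just proved — crude but fully elementary and quantitative. You instead identify the bi-infinite Toeplitz matrix with multiplication by its symbol $f(\omega)=\Delta^{-1}\sum_j|\widehat{\K}((\omega+2\pi j)/\Delta)|^2$ via Poisson summation, get the upper bound from absolute summability of the coefficients (note that \eqref{assfourierk} by itself only controls the unshifted lattice points, so it is cleaner to cite $\sum_n|(\K\star\K)(\Delta n)|<\infty$, which you already have), and get positivity by keeping only the $j=0$ term and scaling the zeros of $\widehat{\K}$ out of $[-\pi/\Delta,\pi/\Delta]$. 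This buys a structural explanation of why ``$\rho$ large'' is needed at all — for $r\ge 1$ the symbol can genuinely degenerate when a zero of $\widehat{\K}$ sits inside the fundamental cell — at the cost of invoking the spectral theory of Laurent operators rather than a two-line domination estimate. Both routes prove the statement.
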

\begin{proof}
We have
\begin{eqnarray*}
(\mathcal{K}\star\mathcal{K})(0)=2\int_{0}^{\infty}x^{2r}e^{-2\rho x}\,dx=2(2\rho)^{-2r-1}\Gamma(2r+1)
\end{eqnarray*}
and 
\begin{multline*}
	 \int_{\R} 
	 	\K_{\Delta j}(v) \K_{\Delta k}(v)\,  
dv  = \left( 
	\int_{-\infty}^{\Delta k}  
	+
	\int_{\Delta k} ^{\Delta j}
	+
	\int_{\Delta j}^{\infty}
\right)
	 	\K_{\Delta j}(v) \K_{\Delta k}(v)\,  
dv \\=: I_{1} + I_{2} +I_{3},
\end{multline*}
where \(\K_{t}(s) := \K(s-t), \; \forall s,t \in \R_{+}\) .
In the sequel we separately consider  integrals \(I_{1}, I_{2}, I_{3}\).
We have
\begin{eqnarray*}
I_{1} &=& \int_{\Delta j} ^{\infty}
\left(
	v - \Delta j
\right)^{r}
\left(
	v - \Delta k
\right)^{r}
e^{-2\rho v+\Delta\rho (j+k)}
dv \\
 &=& \int_{\R_{+}}
	u^{r}
\left(
	u + \Delta (j-k)
\right)^{r}
e^{-2\rho u-\rho \Delta(j-k)}
du\\
 &=& e^{-\rho\Delta(j-k)} \int_{\R_{+}}
	u^{r}
\left(
	\sum_{m=0}^{r} C_{r}^{m} u^{m} 
	\left(
		\Delta (j-k)
	\right)^{r-m}
\right)
e^{-2\rho u}
du\\
&=&
\left[ 
\sum_{m=0}^{r} C_{r}^{m}
(r+m)! 
\frac{
	\Delta^{r-m}
}{
	(2\rho)^{r+m+1}
}
(j-k)^{r-m}\right] e^{-\rho\Delta(j-k)},
\end{eqnarray*}
because \(\int_{\R_{+}} u^{r+m} e^{-2\rho u}du=2^{-(r+m+1)} \Gamma(r+m+1) = (2\rho)^{-(r+m+1)}  (r+m)!.\)
\begin{multline*}
I_{2} = \int_{\Delta k} ^{\Delta j}
\left[ 
- 
\left(
	v - \Delta j
\right)
\left(
	v - \Delta k
\right)
\right]^{r}
e^{-\rho\Delta(j-k)}
dv \\ \leq \frac{\Delta^{2r+1}}{2^{2 r}} \left(
	j-k
\right)^{2r +1 }
e^{-\rho \Delta \left(
	j-k
\right)},
\end{multline*}
because maximum of the quadratic function \(f(v) := - \left(
	v - \Delta j
\right)
\left(
	v - \Delta k
\right)\) is attained at the point \(v=\Delta \left(
	k+j
\right)/2\) and is equal to \(\left(
	\Delta^{2}/4
\right) \left(j-k\right)^{2}.\)
\begin{eqnarray*}
I_{3} &=& \int_{-\infty} ^{\Delta k}
\left( \Delta j -v
\right)^{r}
\left(
 	\Delta k - v
\right)^{r}
e^{2\rho v-\rho\Delta(j+k)}
dv =\\
 &=& \int_{\R_{+}}
 \left(
	u + \Delta (j-k)
\right)^{r}
	u^{r}
e^{-2\rho u-\rho\Delta(j-k)}
du = I_{1}.
\end{eqnarray*}
Next, the well-known Gershgorin circle theorem implies that the minimal eigenvalue of the matrix \( ((\mathcal{K}\star\mathcal{K})(\Delta (k-j)))_{k,j\in \mathbb{Z}}\) is bounded from below by
\[
 (\mathcal{K}\star\mathcal{K})(0)-2\sum_{l>0}(\mathcal{K}\star\mathcal{K})(l)=
  (\mathcal{K}\star\mathcal{K})(0)\left[1-2\kappa_{0} \sum_{l>0}\; l^{\kappa_{1}} e^{-\kappa_{2} l}\right].
\]
Note that for any natural number \(\kappa_{1}>0\)
\[
\sum_{l\geq 1}l^{\kappa_{1}} e^{-\kappa_2 l}=(-1)^{\kappa_{1}}\left.\frac{d^{\kappa_{1}}}{dx^{\kappa_{1}}}\left(\frac{e^{-x}}{1-e^{-x}}\right)\right |_{x=\kappa_2}.
\]
Hence the minimal eigenvalue of the matrix \( ((\mathcal{K}\star\mathcal{K})(\Delta (k-j)))_{k,j\in \mathbb{Z}}\) is bounded from below by a positive number, if \(\kappa_2\) is large enough. Analogously the maximal eigenvalue of the matrix \( ((\mathcal{K}\star\mathcal{K})(\Delta (k-j)))_{k,j\in \mathbb{Z}}\) is bounded from above by 
\begin{eqnarray*}
(\mathcal{K}\star\mathcal{K})(0)+2\sum_{l>0}(\mathcal{K}\star\mathcal{K})(l)=
  (\mathcal{K}\star\mathcal{K})(0)\left[1+2\kappa_{0} \sum_{l>0}\; l^{\kappa_{1}} e^{-\kappa_{2} l}\right]
\end{eqnarray*}
which  is finite.
\end{proof}
\section{Proof of Theorem~\ref{ak_bound}}
The rest of the proof of Theorem~\ref{ak_bound} basically follows the same lines as the proof of Proposition~3.3 from \cite{BelReiss}. First note that 
\begin{multline*}
\max_{|u| \leq U_{n}} \frac{
	\left| 
		\Phi_{n}(u) - \Phi(u)
	\right| 
}{
	\left|
		\Phi(u)
	\right|
}
\leq 
\exp\left\{C_{1} \sigma^2 U_n^2 \int_{\mathbb{R}} (\mathcal{K}(x))^2 dx \right\} 
\cdot 
\max_{|u| \leq U_{n}} 
	\Bigl| 
		\Phi_{n}(u) - \Phi(u)
	\Bigr|
\end{multline*}
for \(n\) large enough.
Next, we separately consider the real and imaginary parts of the difference between \(\Phi_{n}(u)\) and \(\Phi(u).\) Denote
\begin{eqnarray*}
S_{n}(u) :=n  \Re \left(
\Phi_{n}(u) - \Phi(u)
\right)= 
\sum_{k=1}^{n} \left[
	\cos\left(
		u Z_{k \Delta}
	\right)
	-
	\E\left[
		\cos\left(
		u Z_{k\Delta}
	\right)
	\right]
\right]
\end{eqnarray*}
Since \(S_{n}(u)\) is a sum of  centred real-valued random variables, bounded by \(2\) and satisfying \eqref{eq: covz} with \eqref{intR}, there exist a positive constant \(c_1\) such that 
\begin{eqnarray}\label{MPR}
\P \left\{
	\left| 
		S_{n}(u) 
	\right| 
	\geq x 
\right\} \leq 
\exp\left\{
	\frac{
		-c_1 x^{2}
	}
	{
		2n + x \log(n) \log\log(n)
	}
\right\}, \quad \forall \; x\geq 0,
\end{eqnarray}
see Theorem~1 from \cite{MPR}. In order to apply now the classical chaining argument, we divide the interval \([-U_{n},U_{n}]\) by \(2J\) equidistant points \(\left(u_{j}\right)=:\mathcal{G}\), where \(u_{j} = U_{n}(-J+j)/J\), \; \(j=1,\ldots,2J.\) Applying \eqref{MPR}, we get for any \(x\geq 0,\)
\begin{eqnarray}
\label{MPR2}
\P \left\{
	\max_{u_{j} \in \G}
	|S_{n}(u_{j})| \geq x/2
\right\}
\leq 
2J
\exp\left\{
	\frac{
		-c_1 x^{2}
	}
	{
		8n + 2 x \log(n) \log\log(n)
	}
\right\}.
\end{eqnarray}
Note that for any \(u \in [-U_{n}, U_{n}]\) there exists a point \(u^{\star}\in \G\) such that
\(\left|
	u-u^{\star}
\right|  \leq U_{n}/J\)
and therefore
  for all \(k \in 1,\ldots, n,\)
\begin{eqnarray*}
\left|
	\cos(u Z_{k \Delta}) - \cos(u^{\star} Z_{k\Delta})
\right|
\leq 
\left| 
	Z_{k \Delta}
\right| 
\cdot 
\left|
	u-u^{\star}
\right| \leq 
\left| 
	Z_{k \Delta}
\right| 
\cdot 
U_{n}/J.
\end{eqnarray*}
Next, we get
\begin{multline*}
\P \biggl\{
	\max_{| u | \leq U_{n}}
	\left| 
		S_{n}(u) 
	\right| 
	\geq x 
\biggr\} \\ \leq 
\P \biggl\{
	\max_{u_{j} \in \G}
	|S_{n}(u_{j})| \geq x/2
\biggr\}
+ 
\P \biggl\{\sum_{k=1}^{n}
 \left(
|Z_{k\Delta}| + \E \left[
| Z_{k\Delta} |
\right]
\right)
U_{n} /J\geq x/2
\biggr\}.
\end{multline*}
Applying \eqref{MPR2} and the Markov inequality, we arrive at
\begin{multline*}
\P \biggl\{
\max_{| u | \leq U_{n}}
	\left| 
		S_{n}(u) 
	\right| 
	\geq x 
\biggr\} 
\\ \leq 
2J
\exp\left\{
	\frac{
		-c_1 x^{2}
	}
	{
		8n + 2 x \log(n) \log\log(n)
	}
\right\}
+ 
\frac{
	4 U_{n}
}{
	x J
} 
n
\E
 	\left| Z_{\Delta}\right|
,
\end{multline*}
where \(\E\left[
 	|Z_{\Delta}|
\right] \leq  \left(\E\left[
 	|Z_{\Delta}|^2
\right]\right)^{1/2}\) is finite due to \eqref{sec_mom}. The choice 
\begin{eqnarray*}
J &=&  \floor \left(
\sqrt{\frac{U_{n} n}{x} \cdot \exp \left\{
	\frac{c_1 x^{2}}{
		8 n + 2x \log(n) \log\log(n)
		}
\right\}}
\right),
\end{eqnarray*}
where \(\floor(\cdot)\) stands for the largest integer smaller than the argument,
leads to the estimate
\begin{eqnarray*}
\P \biggl\{
\max_{| u | \leq U_{n}}
	\left| 
		S_{n}(u) 
	\right| 
	\geq x 
\biggr\} 
&\leq& c_{2} \sqrt{\frac{U_{n} n}{x}}
\exp\left\{
	\frac{
		-c_1 x^{2}
	}
	{
			16n + 4 x \log(n) \log\log(n)
	}
\right\}\\
&\leq&
c_{2} \sqrt{\frac{U_{n} n}{x}}
\exp\left\{
	\frac{-c_{3}x^{2}}{n}
\right\},
\end{eqnarray*}
which holds for \(n\) large enough with \(c_{2}=2 \left(
	1 + \E\left[|
		Z_{\Delta}|
	\right]
\right), c_{3}= c_1/17,\) provided \(x \lesssim n^{1-\eps}\) with some \(\eps>0.\) Finally, 
\begin{multline*}
\P \biggl\{
\max_{| u | \leq U_{n}}
	\left| 
		S_{n}(u) 
	\right| 
	\geq x 
\biggr\} 
\\ \geq
\P \left\{
\max_{|u| \leq U_{n}} \left| \Re\left( \frac{
		\Phi_{n}(u) - \Phi(u)
}{
		\Phi(u)
} 
\right)\right|\geq \frac{x}{n} \exp\left\{C_{1} \sigma^2 U_n^2 \int_{\mathbb{R}} (\mathcal{K}(x))^{2} dx \right\} 
\right\}.
\end{multline*}
Therefore, the choice \[x = K n \exp\left\{-C_{1} \sigma^2 U_n^2 \int_{\mathbb{R}} (\mathcal{K}(x))^{2} dx \right\} \eps_{n} /2= K \sqrt{n \log(n)}/2\] with any positive \(K\) leads to 
\begin{eqnarray*}
\P \left\{
	\max_{|u| \leq U_{n}}\left| \Re\left( \frac{
		\Phi_{n}(u) - \Phi(u)
}{
		\Phi(u)
} 
\right)\right|\geq 
\frac{
	K \eps_{n }
}{2} 
\right\}
\leq
\frac{
	\sqrt{2} c_{2}
}{
	\sqrt{K}
}
\frac{
	\sqrt{U_{n}} n^{(1/4) - c_{3}( K^{2}/4)}
}
{
	\log^{1/4}(n)
}.
\end{eqnarray*}
 Since the same statement holds for the imaginary bound of \(\left( \Phi_{n}(u) - \Phi(u) \right) / \Phi(u), \) we arrive at the desired result.




\section*{References}
\bibliographystyle{elsarticle-num}
\bibliography{bibliography-final-1}

\end{document}